\pgfplotsset{compat=1.3}
\def\imod#1{\allowbreak\mkern10mu({\operator@font mod}\,\,#1)}
\theoremstyle{plain}
\newtheorem{theorem}{Theorem}[section]
\newtheorem{lemma}[theorem]{Lemma}
\newtheorem*{lemma*}{Lemma}
\newtheorem*{proposition*}{Proposition}
\newtheorem{corollary}[theorem]{Corollary}
\newtheorem*{observation*}{Observation}
\theoremstyle{definition}
\theoremstyle{remark}
\newtheorem*{remark*}{Remark}
\newtheorem*{example*}{Example}
\definecolor{gold}{rgb}{0.85,0.65,0}
\newcommand{\ip}[2]{\left\langle #1 , #2 \right\rangle}    %
\def\beq{\begin{equation}}
\def\eeq{\end{equation}}
\def\fnote#1{\footnote}
\def\cO{{\cal O}}
\newcommand{\bbE}{\mathbb{E}}
\newcommand{\bbR}{\mathbb{R}}
\DeclareMathOperator{\dom}{dom}
\begin{document}
\title{The Randomized Block Coordinate Descent Method in the H{\"o}lder Smooth Setting}
\author{Leandro Farias Maia}
\author{David Huckblerry Gutman}
\affil[1]{Texas A$\&$M University}

\maketitle

\begin{abstract}
This work provides the first convergence analysis for the Randomized Block Coordinate Descent method for minimizing a function that is both H\"older smooth and block H\"older smooth. Our analysis applies to objective functions that are non-convex, convex, and strongly convex. For  non-convex functions, we show that the expected gradient norm reduces at an $O\left(k^{\frac{\gamma}{1+\gamma}}\right)$ rate, where $k$ is the iteration count and $\gamma$ is the H\"older exponent. For convex functions, we show that the expected suboptimality gap reduces at the rate $O\left(k^{-\gamma}\right)$. In the strongly convex setting, we show this rate for the expected suboptimality gap improves to $O\left(k^{-\frac{2\gamma}{1-\gamma}}\right)$ when $\gamma>1$ and to a linear rate when $\gamma=1$. Notably, these new convergence rates coincide with those furnished in the existing literature for the Lipschitz smooth setting. 
\end{abstract}

\section{Introduction}

In this article, we provide non-asymptotic convergence rates for the Randomized Block Coordinate Descent (RBCD) method when applied to the problem
\begin{equation}\label{eq:problem}
f^{*} := \min_{x\in\mathbb{R}^d} f(x)
\end{equation}
where the objective function $f:\bbR^d\to\bbR$ is H\"older smooth, a generalization of the standard (Lipschitz) smoothness, and block H\"older smooth. Formally, the continuously differentiable function, $f$, is said to be \emph{H\"older smooth} when its gradient, $\nabla f$, is H\"older continuous, i.e. there exist $L>0$ and $\gamma\in (0,1]$ guaranteeing
\begin{equation}\label{eq:holder}
\|\nabla f(y)-\nabla f(x)\|\leq L\|y-x\|^\gamma\quad\text{for all }x,y\in\bbR^d.
\end{equation}

The popularity of block coordinate methods owes to their fitness for large-scale optimization problems emerging from applications in machine learning and statistics. Essentially, randomized block coordinate descent is a (random) block-wise adaptation of gradient descent. Instead of updating all coordinates simultaneously, the randomized block coordinate descent method updates a single, randomly selected coordinate block using only that block's partial gradient. The computational economy of these block gradient updates, relative to full gradient updates, are what make the randomized block coordinate  descent method especially attractive for large-scale problems. Given an initial point $x^0$, this cheap iterate update rule is somewhat more generally realized as
\begin{equation}\label{eq:BCD}
x^k=x^{k-1}-t_k\cdot P_{i_k}\nabla f(x^{k-1}), \quad k=1,2,\ldots
\end{equation}
where $t_k>0$, $i_k$ is selected randomly from $\{1,\ldots,m\}$, and $P_1,\ldots,P_m\in\bbR^{d\times d}$ are orthogonal projection matrices onto orthogonal subspaces that sum to $\bbR^d$. The``block coordinate" name originates from the archetypal choice for the orthogonal subspaces projected onto: spans of collections of coordinate vectors.

For coordinate descent methods, and indeed a preponderance of first-order methods, the intimate relationship between the selection of step-sizes and $\nabla f$'s regularity determines their convergence rates \cite{GlobConvGradHolder,Sh-Dvu-Gasn, NestUnivGradMethod, SmoothBanditOptGenHoldSpa, Nemi-Yur, Nemi-Ark-1983, MultScaZOOptSmFunc,GenHolSmooConvRatFollSpecRatAssGroBoun, Gutman22}. Both Bredies \cite{Bredies08} and Yashtini \cite{Yashtini16} study the interplay between step-size selection and convergence for gradient descent applied to \eqref{eq:problem} in the H\"older smooth regime. Bredies \cite{Bredies08} established a $O\left(1/k^\gamma\right)$ convergence rate of the proximal gradient method, a generalization of the gradient descent method, for convex composite minimization. On the other hand, Yashtini \cite{Yashtini16} established that, given an appropriate step-size selection, gradient descent converges at a  $O\left(1/k^{\frac{\gamma}{1+\gamma}}\right)$ for non-convex, H\"older smooth objective functions. 

We are unaware of any studies of the randomized block coordinate descent method that assume H\"older smoothness or its block-wise adaptation, block H\"older smoothness.  We say that the continuously differentiable function, $f$, is \emph{block H\"older smooth} if for each $i=1,\ldots,m$, there exists $L_i>0$ such that
\begin{equation}\label{eq:Holder-block}
\|\nabla f(x+P_i u)-\nabla f(x)\|\leq L_i\|P_i u\|^\gamma\text{ for all }u\in\bbR^d.
\end{equation}
The seminal articles \cite{Nesterov12,Richtarik14} studying the randomized block coordinate descent method all make the more restrictive assumption that the gradient is Lipschitz continuous. 
Recently, inspired by the work of both Bredies \cite{Bredies08} and Yashtini \cite{Yashtini16}, Gutman and Ho-Nguyen \cite{Gutman22} produced a convergence analysis for the cyclic block coordinate descent method assuming H\"older and block H\"older smoothness in both the convex and non-convex settings. Thus, the goal of this paper is to extend this analysis to the more popular randomized block coordinate descent method in the non-convex, convex, and even strongly convex settings. 

We conclude this introduction with an outline of our article that includes a high-level description of each of our main contributions. This article is structured into four primary sections:
\begin{itemize}
	\item \textit{Section} \ref{sec:algorithm}: In this section, we introduce our RBCD step-size selection for H\"older smooth objective functions as well as the attendant notation. We also introduce two key lemmata (Lemmata \ref{lemma:duality} and \ref{lemma:block_descent}) that support our analyses.
	\item \textit{Section} \ref{sec:nonconvex}: In this section, we present our convergence analysis for general, possibly non-convex objective functions satisfying H\"older and block H\"older smoothness conditions.  For these objectives, our proposed step-size ensures RBCD shrinks the expected gradient norm at a $O\left(1/k^{\frac{\gamma}{1+\gamma}}\right)$ rate (Theorem \ref{thm.conv.rate.general}). 
	\item \textit{Section} \ref{sec:convex}: In this section, we present our convergence analysis under the further assumption that the objective function is convex. In this setting, RBCD with our step-size shrinks the expected suboptimality gap at a $O\left(1/k^{\gamma}\right)$ rate for non-strongly convex objective functions (Theorem \ref{thm.conv.rate.convex}). Notably, our rates for these objective functions coincide with those of \cite{Nesterov12} when $\gamma=1$, or equivalently, when the objective is $L$-smooth. 
	\item \textit{Section} \ref{sec:strong-convex}: In this section, we present our analysis under the further assumption that the objective function is strongly convex. This analysis depends upon the value of the H\"older exponent, $\gamma$. When $\gamma=1$, we show RBCD converges at a linear rate (Theorem \ref{thm.conv.rate.strong-convex}). When $\gamma \in (0,1)$, we obtain a $\mathcal{O}\left( 1/k^{\frac{2\gamma}{1-\gamma}} \right)$ rate of convergence (Theorem \ref{thm.conv.rate.strong-convex}). Moreover, we show that our sublinear rates converge to our linear rates as $\gamma\to 1$ (Corollary \ref{cor:str-cvx.interpolation}). As for convex objectives, our rates for strongly convex objectives coincide with those of \cite{Nesterov12} when the objective is $L$-smooth. 
\end{itemize}
\section{Notation and Step-size Selection for RBCD Under H\"older Smoothness}
\label{sec:algorithm}

This short section introduces the notation necessary for all of this article's developments, and  the H\"older smoothness-based step-size selection for the RBCD method. It also exhibits two lemmata, Lemmata \ref{lemma:duality} and \ref{lemma:block_descent}, that are used throughout the paper to aid the convergence analysis of the proposed method.

Our step-size selection is an adaptation of that used for the cyclic block setting from \cite{Gutman22} to the immensely more popular randomized block setting. Thus, our notation is a synthesis of that article's notation and the notation of \cite{Nesterov12}, one of the canonical works on randomized coordinate descent. We let $\tilde{L}:=\{L_1,\ldots,L_m\}$ denote the set of the block H\"older smoothness constants. For $\alpha\in\mathbb{R}$, we define the new constant, $S_{\alpha}(f)$, as
\[
S_{\alpha}(f) = \sum_{i=1}^{m}L_i^{\alpha}.
\]
When $f$ is clear from context, we will simply write $S_\alpha$. For the sake of concision, we set $\nabla_i f(x):=P_i \nabla f(x)$ for all $x\in\bbR^d$ and $1\leq i\leq m$. We adopt the notation, $\nu:=\frac{1+\gamma}{\gamma}>1$, because the quantity $\frac{1+\gamma}{\gamma}$  frequently appears in our analysis.

Much of our analysis is framed in terms of $\tilde{L}$-weighted $q$-norms on $\bbR^d$, $\|\cdot\|_{\alpha,q}$. Given $\alpha\in\bbR$ and $q\geq 1$, we let 
\begin{equation}\label{eq:weighted_norm}
\|x\|_{\alpha,q} := \left[ \sum_{i=1}^{m}L_{i}^\alpha\|P_i x\|^q  \right]^{1/q}.
\end{equation}
When $\alpha=0$, $\|\cdot\|_{\alpha,q}$ reduces to the standard $q$-norm, which we write as $\|\cdot\|_q$. For simplicity, we let $\|\cdot\|:=\|\cdot\|_2$.
We bare three important notes about these weighted norms. First, $\|\cdot\|_{\alpha,q}$ generalizes the norm
\[
x\mapsto\left[\sum_{i=1}^{m}L_{i}^\alpha\|P_i x\|^2  \right]^{1/2},
\]
which plays a starring role throughout Nesterov's classical analysis of randomized coordinate descent methods from \cite{Nesterov12} in the block Lipschitz smooth setting. The flexibility provided by changing the exponents $2$ and $1/2$ to $q$ and $1/q$, respectively, is critical to capturing our more general H{\"o}lderian convergence rates.  Additionally, the parameter $\alpha$ permits us to simultaneously achieve RBCD's convergence rates for two different, common random block selection schemes:
\begin{itemize}
\item[i)] $\alpha=0$ corresponds to selecting the blocks uniformly at random;
\item[ii)] $\alpha=1$ corresponds to selecting the $i$-th block with probability $L_i/\sum_{i=1}^m L_i$.
\end{itemize}
Finally, these weighted norms possess natural duality relationships and equivalences to the Euclidean norm, which we liberally use throughout our analysis and summarize in the below lemma.
\begin{lemma}[$(\alpha,q)$-Norm Duality and Equivalences]\label{lemma:duality}
Let $\alpha\in\bbR$, $p\in[1,\infty]$, and $q$ be the H{\"o}lder conjugate of $p$, i.e. $q:=\frac{p}{p-1}$. The following hold for $\|\cdot\|_{\alpha,p}$:
\begin{enumerate}
	\item The Cauchy-Schwarz inequality
\begin{equation}\label{eq:cauchy-schwarz}
|\ip{x}{y}|\leq \|x\|_{\alpha,q}\|y\|_{-\alpha\frac{p}{q},q}
\end{equation}
holds for all $x,y\in\bbR^d$. Equality is obtained if and only if $x=0$ or 
\[
y=c\cdot\sum_{i=1}^m L_i^\alpha \|P_i x\|^{p-2}P_i x
\] 
for some $c\in\bbR$. Consequently, $\|\cdot\|_{-\alpha\frac{p}{q},q}$ is the dual norm of $\|\cdot\|_{\alpha,q}$.
	\item If $p\geq 2$ and $\alpha,\beta\in\bbR$ then the norms $\|\cdot\|_{\alpha,p}$ and $\|\cdot\|_{\beta,2}$ satisfy 
\[
\left( \max_{1\leq i\leq m}L_i^{\frac{\alpha}{p}-\frac{\beta}{2}}\right)\cdot  \|x\|_{\beta,2} \geq \|x\|_{\alpha,p} \geq \left( m^{\frac{1}{p}-\frac{1}{2}}\cdot \min_{1\leq i\leq m}L_i^{\frac{\alpha}{p}-\frac{\beta}{2}}\right) \cdot \|x\|_{\beta,2} 
\]
for all $x\in\bbR^d$.
\end{enumerate} 
\end{lemma}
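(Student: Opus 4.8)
The plan is to derive both parts from the classical scalar H\"older and $\ell^p$-norm comparison inequalities applied to the block-magnitude sequence $\big(\|P_i x\|\big)_{i=1}^m \in \bbR^m$. The one structural fact used everywhere is that, since $P_1,\dots,P_m$ are orthogonal projections onto mutually orthogonal subspaces with $\sum_i P_i = I$, we have $P_i = P_i^\top = P_i^2$ and $P_iP_j = 0$ for $i\neq j$; consequently the inner product splits blockwise as $\ip{x}{y} = \sum_{i=1}^m \ip{P_i x}{P_i y}$. For part 1 I would first bound $|\ip{x}{y}| \le \sum_i |\ip{P_i x}{P_i y}| \le \sum_i \|P_i x\|\,\|P_i y\|$, using the triangle inequality followed by the ordinary Cauchy--Schwarz inequality inside each block. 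I would then write each summand as $\big(L_i^{\alpha/p}\|P_i x\|\big)\big(L_i^{-\alpha/p}\|P_i y\|\big)$ and apply the scalar weighted H\"older inequality with the conjugate exponents $p$ and $q$ to the two resulting sequences. The first factor collapses to $\|x\|_{\alpha,p}$, and the second to the asserted dual weighted $q$-norm of $y$ once the weight exponent is simplified via $\tfrac1p+\tfrac1q=1$ (equivalently $p+q=pq$); the endpoint cases $p\in\{1,\infty\}$ are handled by the usual conventions.

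The equality analysis is where the care lies, and it is exactly what promotes the inequality to the dual-norm identity. Tightness forces three conditions at once: equality in each blockwise Cauchy--Schwarz, so $P_i y$ is parallel to $P_i x$; a common sign across blocks so that the triangle inequality is tight; and equality in H\"older, so that the weighted $p$-th powers $L_i^\alpha\|P_i x\|^p$ are proportional to the correspondingly weighted $q$-th powers built from $y$. Combining these and solving for the blockwise proportionality constant gives $\|P_i y\| = c\,L_i^{\alpha}\|P_i x\|^{p-2}\,\|P_i x\|$ with a common $c\ge 0$ (signs absorbed), where the exponent $p-2$ appears because $p/q = p-1$; summing $y=\sum_i P_i y$ yields the stated closed form $y = c\sum_i L_i^\alpha\|P_i x\|^{p-2}P_i x$, together with the degenerate case $x=0$. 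The inequality itself shows the dual norm of $\|\cdot\|_{\alpha,p}$ is at most the asserted norm, while this explicit maximizer shows the defining supremum is attained with equality, giving the exact dual-norm identity.

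For part 2 I would eliminate the $\beta$-weights by the change of variables $b_i := L_i^{\beta/2}\|P_i x\|$, so that $\|x\|_{\beta,2} = \|b\|_2$ and $\|x\|_{\alpha,p} = \big\|(w_i b_i)_i\big\|_p$ with $w_i := L_i^{\alpha/p-\beta/2}$. Factoring the weights out through $\min_i w_i \le w_i \le \max_i w_i$ reduces the claim to the two standard comparisons of the $\ell^p$ and $\ell^2$ norms on $\bbR^m$ that are valid for $p\ge 2$: the monotonicity bound $\|b\|_p \le \|b\|_2$ for the upper estimate, and the reverse power-mean bound $\|b\|_p \ge m^{1/p-1/2}\|b\|_2$ for the lower estimate. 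Substituting these back reproduces exactly the stated constants $\max_i L_i^{\alpha/p-\beta/2}$ and $m^{1/p-1/2}\min_i L_i^{\alpha/p-\beta/2}$.

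I expect the only genuinely delicate step to be the equality characterization in part 1: the three separate tightness conditions must be reconciled into a single closed-form expression for $y$, and it is this reconciliation, rather than the inequality (which is routine weighted H\"older), that certifies the dual-norm claim. Part 2 is essentially bookkeeping around two textbook $\ell^p$ inequalities, the only point to watch being that $p\ge 2$ forces $m^{1/p-1/2}\le 1$, so that both comparisons point in the directions asserted.
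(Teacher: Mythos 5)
Your proposal is correct and follows essentially the same route as the paper's proof: part 1 via the blockwise splitting $\ip{x}{y}=\sum_i\ip{P_ix}{P_iy}$, blockwise Cauchy--Schwarz, and the weighted scalar H\"older inequality with exponents $p,q$, with the equality case obtained by reconciling the blockwise proportionality and H\"older-tightness conditions into the closed form $y=c\sum_i L_i^\alpha\|P_ix\|^{p-2}P_ix$; part 2 via the standard $\ell^p$--$\ell^2$ comparisons applied to the weighted block-magnitude vector. Your explicit attention to sign consistency across blocks (needed because the statement bounds $|\ip{x}{y}|$) is a minor point the paper glosses over, but it changes nothing of substance.
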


\noindent We defer the proof of this lemma to the appendix (Appendix \ref{app:duality}) to maintain the focus of our exposition. 

With all of the article's requisite notation in hand, we may introduce our main algorithm (Algorithm \ref{algo.main}), and describe an associated descent lemma (Lemma \ref{lemma:block_descent}). We note that our step-size, $-\|\nabla_{i}f(x^k)\|^{\nu-2}/L_i^{\nu-1}$, coincides with that proposed in \cite{Nesterov12} when $\gamma=1$. Thus, we may view it as a generalization that accounts for the use of block H\"older smoothness in the place of standard block smoothness.

\begin{algorithm}[H]
\caption{Randomized Block Coordinate Descent Method (RBCD)}\label{algo.main}
	\KwData{$x^0 \in \dom(f)$, $\alpha\in[0,1]$}
	\For{k=0,1,2,\ldots}{
	Choose
		\[
		i_k\sim \left(p_1,\ldots,p_m\right):=\left(\frac{L_{1}^{\alpha}}{\sum_{j=1}^{m}L_{j}^{\alpha}},\ldots,\frac{L_{m}^{\alpha}}{\sum_{j=1}^{m}L_{j}^{\alpha}}\right);
	\]
	\noindent Update block $i_k$ of $x^k$ according to
		\begin{equation}\label{eq:main_update}
			x^{k+1}:=x^{k}-\frac{\|\nabla_{i}f(x^k)\|^{\nu-2}}{L_i^{\nu-1}}\cdot \nabla_i f(x^k).
		\end{equation}
		}	
\end{algorithm}

A special case of the main descent lemma of \cite{Bredies08}, derived in \cite{Gutman22}, plays the same role in our analysis that it played for the cyclic block analysis in \cite{Gutman22}. We directly quote this special case from \cite{Gutman22} below.

\begin{lemma}
\label{lemma:block_descent}[Block H{\"o}lder Descent Lemma, \cite{Gutman22}, Lemma 1] Let $f:\mathbb{R}^{d}\rightarrow R$ be a function that satisfies the block H\"older smoothness condition. For any $i$, $1\leq i \leq m$,
\begin{equation}\label{eq:upper-model}
    f(x+U_iy) \leq f(x) + \ip{\nabla_i f(x)}{U_iy} + \frac{L_i}{1+\gamma}\|U_iy\|^{1+\gamma}_2.
\end{equation}
Moreover, if $x^+$ is the minimizer of the right-hand side of \eqref{eq:upper-model}, i.e.
\[
x^+=x-\frac{\|\nabla_{i}f(x)\|^{\nu-2}}{L_i^{\nu-1}}\cdot \nabla_i f(x),
\]
then
\[
f(x)-f(x^+)\geq\frac{1}{\nu L_i^{\nu-1}}\|\nabla_i f(x)\|^\nu.
\]
\end{lemma}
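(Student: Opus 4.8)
The plan is to prove the two assertions in order, obtaining the second as a direct consequence of the first together with a one-dimensional minimization. For the upper model \eqref{eq:upper-model}, I would set $z := U_i y$, which lies in the range of $P_i$ (the $i$-th block subspace), and write the gap via the fundamental theorem of calculus:
\[
f(x+z)-f(x)-\ip{\nabla f(x)}{z} = \int_0^1 \ip{\nabla f(x+tz)-\nabla f(x)}{z}\,dt.
\]
Applying Cauchy--Schwarz inside the integral and then the block H\"older bound \eqref{eq:Holder-block} (which applies because $z$ lies in the $i$-th block subspace, giving $\|\nabla f(x+tz)-\nabla f(x)\| \leq L_i\|tz\|^\gamma$) yields $\int_0^1 L_i t^\gamma \|z\|^{1+\gamma}\,dt = \tfrac{L_i}{1+\gamma}\|z\|^{1+\gamma}$. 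Since $z\in\Image P_i$ and $P_i$ is an orthogonal projection, $\ip{\nabla f(x)}{z}=\ip{\nabla_i f(x)}{z}$, and \eqref{eq:upper-model} follows.

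For the second assertion, I would first argue that minimizing the right-hand side of \eqref{eq:upper-model} over $z\in\Image P_i$ reduces to a scalar problem. Writing $g := \nabla_i f(x)$, for a fixed radius $\|z\|=r$ the nonlinear term depends only on $r$, while the linear term $\ip{g}{z}$ is smallest when $z$ is anti-parallel to $g$ (feasible since $g\in\Image P_i$); hence the minimizer has the form $z=-\tau g$ with $\tau\geq 0$. Substituting gives the objective $\phi(\tau)=-\tau\|g\|^2+\frac{L_i}{1+\gamma}\tau^{1+\gamma}\|g\|^{1+\gamma}$, whose stationarity condition $\tau^\gamma = \|g\|^{1-\gamma}/L_i$ I would solve to obtain $\tau = \|g\|^{(1-\gamma)/\gamma}/L_i^{1/\gamma}$.

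The crux is then the bookkeeping that converts these exponents into the clean constants of the statement using $\nu=\frac{1+\gamma}{\gamma}$. Recording the identities $\nu-1=\tfrac{1}{\gamma}$ and $\nu-2=\tfrac{1-\gamma}{\gamma}$ immediately shows $\tau = \|g\|^{\nu-2}/L_i^{\nu-1}$, so $x^+ = x-\tau g$ matches the stated update. Plugging $\tau$ back into $\phi$, both terms collapse to multiples of $L_i^{-(\nu-1)}\|g\|^\nu$, leaving
\[
f(x)-\phi(\tau) = \left(1-\tfrac{1}{1+\gamma}\right)L_i^{-(\nu-1)}\|g\|^\nu = \tfrac{\gamma}{1+\gamma}\,L_i^{-(\nu-1)}\|g\|^\nu;
\]
since $\tfrac{\gamma}{1+\gamma}=\tfrac{1}{\nu}$, this equals $\tfrac{1}{\nu L_i^{\nu-1}}\|\nabla_i f(x)\|^\nu$. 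Finally, because \eqref{eq:upper-model} evaluated at $x^+$ gives $f(x^+)\leq \phi(\tau)$, the descent inequality follows. I expect the only genuine friction to be the exponent arithmetic and the careful justification of the directional reduction; everything else is routine.
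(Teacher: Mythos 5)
Your argument is correct and is essentially the standard proof of this descent lemma; the paper itself does not reprove it but imports it verbatim from \cite{Gutman22}, where the derivation is the same one you give --- the fundamental theorem of calculus plus Cauchy--Schwarz plus the block H\"older bound for the upper model, followed by reduction to the anti-gradient direction and exact one-dimensional minimization, with $\nu-1=1/\gamma$ and $\nu-2=(1-\gamma)/\gamma$ doing the exponent bookkeeping. The only blemish is notational: your $\phi(\tau)$ excludes the constant $f(x)$, so the displayed identity should read $-\phi(\tau)=\tfrac{\gamma}{1+\gamma}L_i^{-(\nu-1)}\|g\|^{\nu}$ and the final step should be $f(x^+)\leq f(x)+\phi(\tau)$; this does not affect the conclusion.
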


\section{Convergence Analysis: General Objectives}
\label{sec:nonconvex}

In this section, we layout our convergence rate analysis for non-convex objectives satisfying H\"older smoothness \eqref{eq:holder} and block H\"older smoothness \eqref{eq:Holder-block}. We will present the main convergence theorem (Theorem \ref{thm.conv.rate.general}) after we elaborate our key Sufficient Decrease Lemma (Lemma \ref{lemma:sufficient-decrease}). This lemma facilitates all of our convergence analyses.

\begin{lemma}\label{lemma:sufficient-decrease}
\emph{(Sufficient Decrease)}  Let $\{x_n\}_{n=0}^{\infty}$ be the sequence generated by RBCD (Algorithm \ref{algo.main}). If $f$ satisfies our H{\"o}lder smoothness \eqref{eq:holder} and block H\"older smoothness \eqref{eq:Holder-block} assumptions, then
\begin{equation}\label{eq:sufficient-decrease}
\frac{1}{\nu S_\alpha(f)}\|\nabla f(x_k)\|_{\alpha+1-\nu,\nu}^\nu\leq f(x_k)-\bbE\left[f(x^{k+1})\bigg| x^k\right]
\end{equation}
holds for all $k\geq 0$.
\end{lemma}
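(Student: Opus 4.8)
The plan is to condition on $x^k$ and average the single-block descent guarantee of Lemma \ref{lemma:block_descent} over the random block index $i_k$. First I would fix $x^k$ and invoke the second part of that lemma. The update rule \eqref{eq:main_update} is, by construction, exactly the minimizer $x^+$ appearing in Lemma \ref{lemma:block_descent} with the block taken to be the randomly selected index $i_k$. Hence for every realization of $i_k$ I obtain the deterministic per-block bound
\[
f(x^k)-f(x^{k+1})\geq \frac{1}{\nu L_{i_k}^{\nu-1}}\|\nabla_{i_k}f(x^k)\|^\nu .
\]

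Second, I would take the conditional expectation of both sides given $x^k$. Since $x^k$ is fixed under this conditioning, the only randomness entering $x^{k+1}$ is the choice of $i_k\sim(p_1,\ldots,p_m)$, so averaging the right-hand side against the sampling distribution gives
\[
f(x^k)-\bbE\!\left[f(x^{k+1})\,\big|\,x^k\right]\geq \sum_{i=1}^{m}p_i\cdot\frac{1}{\nu L_i^{\nu-1}}\|\nabla_i f(x^k)\|^\nu .
\]
Third, I would substitute the prescribed probabilities $p_i=L_i^{\alpha}/S_\alpha(f)$ from Algorithm \ref{algo.main} and collect the powers of $L_i$, which turns the right-hand side into
\[
\frac{1}{\nu S_\alpha(f)}\sum_{i=1}^{m}L_i^{\alpha+1-\nu}\|\nabla_i f(x^k)\|^\nu .
\]
Finally, recalling that $\nabla_i f=P_i\nabla f$ and comparing with the weighted-norm definition \eqref{eq:weighted_norm} evaluated at weight exponent $\beta=\alpha+1-\nu$ and $q=\nu$, this sum is precisely $\|\nabla f(x^k)\|_{\alpha+1-\nu,\nu}^\nu$, which yields \eqref{eq:sufficient-decrease}.

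I do not anticipate a genuine obstacle here: the statement is essentially the expectation of the descent lemma over the block-sampling scheme. The only step that warrants care is the exponent bookkeeping in the third move, where the factor $L_i^{-(\nu-1)}$ coming from the descent step must combine with the factor $L_i^{\alpha}$ coming from the sampling weight to produce exactly the exponent $\alpha+1-\nu$ demanded by the target norm, and one must confirm that the summation exponent on $\|\nabla_i f(x^k)\|$ matches the norm order $q=\nu$. Because the step-size in \eqref{eq:main_update} is matched to the $L_i$-weighting of both the descent lemma and the sampling probabilities, these powers align cleanly and no auxiliary inequality (e.g.\ from Lemma \ref{lemma:duality}) is needed for this particular result.
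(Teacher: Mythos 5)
Your argument is correct and follows essentially the same route as the paper's proof: expand the conditional expectation as a sum over the sampling distribution, apply the second part of Lemma \ref{lemma:block_descent} blockwise, combine $L_i^{\alpha}$ with $L_i^{-(\nu-1)}$ to get the exponent $\alpha+1-\nu$, and recognize the resulting sum as $\|\nabla f(x^k)\|_{\alpha+1-\nu,\nu}^{\nu}$. The exponent bookkeeping you flag checks out exactly as you describe.
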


\begin{proof}
Expanding the expectation-defining sum, and applying the block descent lemma (Lemma \ref{lemma:block_descent}), we compute
\begin{align*}
f(x_k)-\bbE\left[f(x^{k+1})\bigg| x^k\right]&=\bbE\left[f(x_k)-f(x^{k+1})\bigg| x^k\right]\\
&=\sum_{i=1}^{m}\left(\frac{L_i^{\alpha}}{\sum_{j=1}^{m}L_{j}^{\alpha}}\right)\cdot\left[ f(x_k)-f\left(x^{k}-\frac{\|\nabla_{i}f(x^k)\|^{\nu-2}}{L_i^{\nu-1}}\cdot \nabla_i f(x^k)\right)\right]\\
&\stackrel{\text{Lemma }\ref{lemma:block_descent}}\geq \frac{1}{\nu S_\alpha}\sum_{i=1}^{m}L_i^{\alpha+1-\nu}\|\nabla_{i}f(x^k)\|^\nu\\
&=\frac{1}{\nu S_\alpha}\|\nabla f(x_k)\|_{\alpha+1-\nu,\nu}^\nu.
\end{align*}
Rearranging the inequality and taking total expectations completes the proof.
\end{proof}

Next, we present the centerpiece of this section, our main convergence theorem for non-convex objective functions.

\begin{theorem}[RBCD Convergence: General Objective Functions]\label{thm.conv.rate.general}
Let $\{x_n\}_{n=0}^{\infty}$ be the sequence generated by RBCD (Algorithm \ref{algo.main}). If $f$ satisfies our H{\"o}lder smoothness \eqref{eq:holder} and block H\"older smoothness \eqref{eq:Holder-block} assumptions, then
\[
\min_{0\leq j \leq k}\mathbb{E}\left[\|\nabla f(x^j)\|_{1+\alpha-\nu,\nu}\right] \leq  \left(\nu S_\alpha(f)\right)^{\frac{1}{\nu}} \cdot\left(\frac{f(x^0)-f^*}{k+1}\right)^{\frac{1}{\nu}}= \cO\left( k^{-\frac{1}{\nu}}\right)
\]
holds for all $k\geq 0$. Consequently, we have the convergence rate measured in the norm $\|\cdot\|_{\beta,2}$,
\[
\min_{0\leq j \leq k}\mathbb{E}\left[\|\nabla f(x^j)\|_{\beta,2}\right] \leq\left(\frac{\displaystyle \max_{1\leq i\leq m} L_i^{\frac{\beta}{2}-\frac{1+\alpha-\nu}{\nu}}}{m^{\frac{\nu-2}{2\nu}}}\right)\cdot\left(\nu S_\alpha(f)\right)^{\frac{1}{\nu}} \cdot\left(\frac{f(x^0)-f^*}{k+1}\right)^{\frac{1}{\nu}}= \cO\left( k^{-\frac{1}{\nu}}\right),
\]
holds for all $k\geq 0$.
\end{theorem}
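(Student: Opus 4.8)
The plan is to telescope the per-iteration decrease supplied by Lemma~\ref{lemma:sufficient-decrease} into a global rate on the $\nu$-th power of the gradient norm, and then to extract the two displayed bounds by Jensen's inequality and the norm equivalence in Lemma~\ref{lemma:duality}. First I would take total expectations in \eqref{eq:sufficient-decrease}, rewriting it for each index $j$ (via the tower property on the conditional expectation) as
\[
\frac{1}{\nu S_\alpha(f)}\,\bbE\!\left[\|\nabla f(x^j)\|_{\alpha+1-\nu,\nu}^\nu\right]\leq \bbE[f(x^j)]-\bbE[f(x^{j+1})].
\]
Summing over $j=0,\dots,k$ telescopes the right-hand side to $\bbE[f(x^0)]-\bbE[f(x^{k+1})]\leq f(x^0)-f^*$, where the inequality uses $f(x^{k+1})\geq f^*$, which holds pointwise. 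Bounding the resulting sum below by $(k+1)$ times its smallest summand then gives
\[
\min_{0\leq j\leq k}\bbE\!\left[\|\nabla f(x^j)\|_{\alpha+1-\nu,\nu}^\nu\right]\leq \frac{\nu S_\alpha(f)\,(f(x^0)-f^*)}{k+1}.
\]

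The next step bridges the gap between this estimate on $\bbE[\|\cdot\|^\nu]$ and the target quantity $\bbE[\|\cdot\|]$. Because $\nu=\tfrac{1+\gamma}{\gamma}>1$, the map $t\mapsto t^\nu$ is convex on $[0,\infty)$, so Jensen's inequality yields $\bbE[\|\nabla f(x^j)\|_{\alpha+1-\nu,\nu}]^\nu\leq\bbE[\|\nabla f(x^j)\|_{\alpha+1-\nu,\nu}^\nu]$ for each $j$. Since $t\mapsto t^\nu$ is also increasing, the minimum over $j$ commutes with the $\nu$-th power, so combining with the previous display and taking $\nu$-th roots produces the first claimed inequality; the right-hand side is $\cO(k^{-1/\nu})$ because $S_\alpha(f)$ and $f(x^0)-f^*$ do not depend on $k$.

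For the second statement I would convert the weighted $\nu$-norm into the Euclidean-type norm $\|\cdot\|_{\beta,2}$ via Lemma~\ref{lemma:duality}(2). The hypothesis $p\geq 2$ of that lemma is met with $p=\nu$, since $\nu=\tfrac{1+\gamma}{\gamma}\geq 2$ for every $\gamma\in(0,1]$. Applying the lower-bound half of Lemma~\ref{lemma:duality}(2) with $p=\nu$, the lemma's first weight parameter set to $1+\alpha-\nu$, and its second set to $\beta$, gives a pointwise comparison
\[
\|x\|_{\beta,2}\leq\Big(m^{\frac{1}{\nu}-\frac{1}{2}}\min_{1\leq i\leq m}L_i^{\frac{1+\alpha-\nu}{\nu}-\frac{\beta}{2}}\Big)^{-1}\|x\|_{1+\alpha-\nu,\nu}.
\]
Taking expectations, passing to the minimum over $j$ (which only scales the first bound by this positive constant, since if $j^\star$ minimizes $\bbE[\|\nabla f(x^j)\|_{1+\alpha-\nu,\nu}]$ then the minimum over $j$ of the left quantity is controlled at $j^\star$), and substituting the first statement yields the second. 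I expect the main obstacle to be bookkeeping rather than conceptual: one must order the Jensen step, the minimum, and the expectation so that each passage preserves the inequality, and must simplify the product of $m$- and $L_i$-powers coming from the norm equivalence so that it collapses to exactly the displayed multiplicative factor (in particular, tracking the sign of the exponent $\tfrac{1}{\nu}-\tfrac12$ on $m$, which is nonpositive for $\nu\geq 2$). This constant-tracking is the only delicate part of the argument.
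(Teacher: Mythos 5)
Your proposal is correct and follows essentially the same route as the paper's proof: telescope the sufficient decrease inequality of Lemma~\ref{lemma:sufficient-decrease} after taking total expectations, bound the sum below by $(k+1)$ times its smallest term, pass from $\bbE[\|\cdot\|^\nu]$ to $\bbE[\|\cdot\|]^\nu$ by Jensen (using that $t\mapsto t^\nu$ is increasing to commute the minimum with the power), take $\nu$-th roots, and invoke Lemma~\ref{lemma:duality}(2) with $p=\nu\geq 2$ for the second display. One remark: your application of the norm equivalence produces the factor $\bigl(m^{\frac{1}{\nu}-\frac{1}{2}}\min_i L_i^{\frac{1+\alpha-\nu}{\nu}-\frac{\beta}{2}}\bigr)^{-1}=m^{\frac{\nu-2}{2\nu}}\max_i L_i^{\frac{\beta}{2}-\frac{1+\alpha-\nu}{\nu}}$, with the power of $m$ \emph{multiplying} rather than dividing as in the theorem's displayed constant; your placement is the correct one (the printed constant already fails for $L_i\equiv 1$, $\beta=0$, $x$ the all-ones vector, and $\nu>2$), so the discrepancy is a typo in the theorem statement rather than a gap in your argument.
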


\begin{proof}
For each $k\geq 0$, observe that
\begin{align*}
\min_{0\leq j\leq k}\mathbb{E}\left[\|\nabla f(x^j)\|_{\alpha+1-\nu,\nu}\right]^\nu
&\leq \frac{1}{(k+1)}\cdot\sum_{j=0}^k\mathbb{E}\left[\|\nabla f(x^j)\|_{\alpha+1-\nu,\nu}\right]^\nu\\
&\leq \frac{1}{(k+1)}\cdot\sum_{j=0}^k\mathbb{E}\left[\|\nabla f(x^j)\|_{\alpha+1-\nu,\nu}^\nu\right]\\
&\leq \nu S_\alpha\cdot \frac{1}{(k+1)}\cdot\sum_{j=0}^k  \left(\bbE\left[f(x^j)\right]-\bbE\left[f(x^{j+1})\right]\right)\\
&=\nu S_\alpha\cdot\frac{f(x^0)-\bbE[f(x^{k+1})]}{k+1}\\
&\leq\nu S_\alpha\cdot\frac{f(x^0)-f^*}{k+1}
\end{align*} 
where we apply Jensen's inequality to the expectation operator for the convex function $x\mapsto x^\nu$ in the second line, and Lemma \ref{lemma:sufficient-decrease} in  the third line. Taking $\nu$-th roots of both sides of the resultant inequality above, produces our first result. 

The result in terms of the $\|\cdot\|_{\beta,2}$ follows immediately from Lemma \ref{lemma:duality}.
\end{proof}

\section{Convergence Analysis: Convex Objectives}
\label{sec:convex}

In this section, we forward our convergence analysis of RBCD (Theorem \ref{thm.conv.rate.convex}) for convex objective functions that are both H\"older \eqref{eq:holder} and block H\"older \eqref{eq:Holder-block} smooth. First, we  present a Technical Recurrence Lemma (Lemma \ref{lemma_poly}) that helps produce our convergence rates in this section, and a subset of the convergence rates for strongly convex objective functions in the sequel. Next, we exhibit a techical lemma (Lemma \ref{lemma:nesterov-holder-cvx-str}) that permits us to express our rates in terms of the diameter of the initial sublevel set. Finally, the section concludes with our main convergence theorem (Theorem \ref{thm.conv.rate.convex}) and a comparison of these rates to those furnished for smooth and convex functions in \cite{Nesterov12}. 

As promised, we begin this section with a Technical Recurrence Lemma that supports the derivation of our convergence rates.

\begin{lemma}
\label{lemma_poly}(Technical Recurrence, \cite[Chapter~2, Lemma~6]{Polyak_book})\label{lemma:recurrence}
If $\{A_k\}_{k\geq 0}$ is a non-negative sequence of real numbers satisfying the recurrence
\[
A_{k+1}\leq A_k - \theta A_{k}^r
\]
for some $\theta \geq 0$ and $r> 1$, then
\[
A_k \leq \frac{A_0}{(1+(r-1)\theta A_0^{r-1}k)^{\frac{1}{r-1}}}
\]
\end{lemma}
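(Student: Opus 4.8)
The plan is to convert the multiplicative-type recurrence into an additive recurrence for the reciprocal powers of $A_k$, and then telescope. Write $s := r-1 > 0$. I would first dispose of the degenerate cases: if $\theta = 0$ the hypothesis reads $A_{k+1} \leq A_k$, so $A_k \leq A_0$, which matches the right-hand side; and if $A_0 = 0$ then non-negativity together with the recurrence force $A_k = 0$ for all $k$. Hence assume $A_0 > 0$ and $\theta > 0$. Since the hypothesis gives $0 \leq A_{k+1} \leq A_k - \theta A_k^r$, the sequence is non-increasing and, crucially, each term satisfies $\theta A_k^{s} \leq 1$. It then suffices to prove the bound only for indices $k$ with $A_k > 0$: if $A_k = 0$ the left-hand side vanishes while the right-hand side is strictly positive, and whenever $A_k > 0$, monotonicity guarantees that $A_0, \ldots, A_k$ are all positive, which is what lets me invert powers.

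Next I would rewrite the recurrence as $A_{k+1} \leq A_k(1 - \theta A_k^{s})$ and, since $x \mapsto x^{-s}$ is decreasing on $(0,\infty)$, deduce
\[
A_{k+1}^{-s} \geq A_k^{-s}\,(1 - \theta A_k^{s})^{-s}.
\]
The heart of the argument is the elementary inequality $(1-u)^{-s} \geq 1 + su$, valid for all $u \in [0,1)$ and $s > 0$; I would justify it by observing that $u \mapsto (1-u)^{-s}$ is convex on $[0,1)$ with value $1$ and derivative $s$ at $u=0$, so its graph lies above the tangent line $1+su$ there. Applying this with $u = \theta A_k^{s} \in [0,1)$ yields
\[
A_{k+1}^{-s} - A_k^{-s} \geq A_k^{-s}\cdot s\theta A_k^{s} = s\theta.
\]

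Telescoping this additive estimate from $0$ to $k$ gives $A_k^{-s} \geq A_0^{-s} + s\theta k$, equivalently $A_k^{s} \leq A_0^{s}/(1 + s\theta A_0^{s} k)$. Since $s>0$ and both sides are positive, raising to the power $1/s$ preserves the inequality, and substituting $s = r-1$ recovers exactly the claimed bound. The main obstacle — indeed the only nonroutine point — is isolating the tangent-line (Bernoulli-type) inequality $(1-u)^{-s} \geq 1 + su$, since it is precisely the step that linearizes the recurrence; everything else amounts to handling the degenerate cases and summing a telescoping series.
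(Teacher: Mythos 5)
Your proof is correct and complete. The paper itself offers no proof of this lemma---it is quoted from Polyak's book---and your argument (passing to $A_k^{-(r-1)}$, linearizing via the tangent-line inequality $(1-u)^{-s}\geq 1+su$, and telescoping) is essentially the standard proof given in that reference, with the degenerate cases $\theta=0$, $A_0=0$, and $A_k=0$ handled carefully.
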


The following lemma permits us to express our convergence rates here and in the sequel section in terms of the initial sublevel set's diameter.

\begin{lemma}\label{lemma:nesterov-holder-cvx-str}
Under the Block H\"older Smoothness assumption \eqref{eq:Holder-block} and coercivity of $f$, $f$ satisfies
\begin{align*}
f(x)-f^* \leq\left(\frac{\nu S_\alpha(f)}{2}\right)^{\frac{1}{\nu-1}}\cdot\left(\frac{\nu-1}{\nu}\right)\cdot R(x)_{(1+\alpha-\nu)(1-\nu),\frac{\nu}{\nu-1}}^{\frac{\nu}{\nu-1}}
\end{align*}
for all $x\in\bbR^d$, where $R_{\beta,q}(x):=\max\left\{\|y-x^*\|_{\beta,q}:f(x^*)=f^*,f(y)\leq f(x)\right\}<\infty$.
\end{lemma}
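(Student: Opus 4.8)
The plan is to control $f(x)-f^*$ by a power of the distance from $x$ to the set of minimizers, and then absorb that distance into $R(x)$. The exponent $\tfrac{\nu}{\nu-1}$ in the statement is the H\"older conjugate of $\nu$ (note $\tfrac{\nu}{\nu-1}=1+\gamma$), which is the tell-tale sign that the estimate should arise by pairing $\nabla f(x)$, measured in the $\|\cdot\|_{\alpha+1-\nu,\nu}$ norm that already governs the sufficient-decrease bound, against $x-x^*$, measured in its dual norm, through the weighted Cauchy--Schwarz inequality of Lemma~\ref{lemma:duality}. Coercivity enters only to make the right-hand side meaningful: it forces the sublevel set $\{y:f(y)\le f(x)\}$ to be compact, so the maximum defining $R_{\beta,q}(x)$ is attained and finite.

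First I would record a gradient-domination estimate, which needs no convexity. Applying Lemma~\ref{lemma:block_descent} in block $i$ produces $x^+$ with $f(x)-f(x^+)\ge\frac{1}{\nu L_i^{\nu-1}}\|\nabla_i f(x)\|^\nu$; since $f(x^+)\ge f^*$, this gives $\|\nabla_i f(x)\|^\nu\le\nu L_i^{\nu-1}(f(x)-f^*)$ for each $i$, and weighting by $L_i^{\alpha+1-\nu}$ and summing yields
\[ \|\nabla f(x)\|_{\alpha+1-\nu,\nu}^{\nu}\le\nu\,S_\alpha(f)\,\big(f(x)-f^*\big). \]
Then I would use convexity---the standing hypothesis of this section---in the form $f(x)-f^*\le\langle\nabla f(x),x-x^*\rangle$ for any minimizer $x^*$, and bound the right-hand side via Lemma~\ref{lemma:duality} by $\|\nabla f(x)\|_{\alpha+1-\nu,\nu}\,\|x-x^*\|_{\beta,\,\nu/(\nu-1)}$, where $\beta$ is the dual weight assigned to $\alpha+1-\nu$ and the second exponent is the conjugate $\tfrac{\nu}{\nu-1}$---exactly the norm appearing in $R$.

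Writing $\delta:=f(x)-f^*$ and $B:=\|x-x^*\|_{\beta,\nu/(\nu-1)}$, the two ingredients combine into $\delta\le(\nu S_\alpha)^{1/\nu}\delta^{1/\nu}B$, hence $\delta\le(\nu S_\alpha)^{1/(\nu-1)}B^{\nu/(\nu-1)}$, and finally $B\le R_{\beta,\nu/(\nu-1)}(x)$ since $x$ lies in its own sublevel set and $x^*$ is a minimizer. This already has the right shape and exponent. The main obstacle is the constant: the crude chaining above produces $(\nu S_\alpha)^{1/(\nu-1)}$, whereas the statement claims the strictly smaller $\big(\tfrac{\nu S_\alpha}{2}\big)^{1/(\nu-1)}\tfrac{\nu-1}{\nu}$. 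Recovering the sharp factor $\tfrac{\nu-1}{\nu}=\tfrac{1}{1+\gamma}$ appears to require replacing the convexity/Cauchy--Schwarz step by a ``descent lemma at the minimizer'' argument---integrating $\nabla f$ along the segment from $x^*$ to $x$ and using $\nabla f(x^*)=0$, so that the H\"older-smoothness term directly carries the $\tfrac{1}{1+\gamma}$ prefactor---rather than routing through the size of $\nabla f(x)$ at $x$. I would also verify the precise dual weight $\beta$ against Lemma~\ref{lemma:duality}: the printed $(1+\alpha-\nu)(1-\nu)$ and the dual weight $\tfrac{1+\alpha-\nu}{1-\nu}$ coincide at $\nu=2$, so I would reconcile the two forms before finalizing.
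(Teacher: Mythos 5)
Your first two steps are correct, and your self-diagnosis is accurate: the chain ``gradient domination $\|\nabla f(x)\|_{\alpha+1-\nu,\nu}^{\nu}\le\nu S_\alpha\,(f(x)-f^*)$, then convexity plus weighted Cauchy--Schwarz'' yields only $f(x)-f^*\le(\nu S_\alpha)^{1/(\nu-1)}R^{\nu/(\nu-1)}$, whose constant exceeds the stated $\left(\tfrac{\nu S_\alpha}{2}\right)^{1/(\nu-1)}\tfrac{\nu-1}{\nu}$ by the factor $2^{1/(\nu-1)}\tfrac{\nu}{\nu-1}>1$. Since the lemma asserts that specific constant and Theorem \ref{thm.conv.rate.convex} substitutes it verbatim when simplifying its rate, this is a genuine gap, not a cosmetic one. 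Moreover, the repair you sketch --- ``integrate $\nabla f$ along the segment from $x^*$ to $x$'' --- is necessary but not sufficient: integrating the raw block H\"older condition \eqref{eq:Holder-block} along a segment produces neither the weighted norm $\|\cdot\|_{(1+\alpha-\nu)(1-\nu),\frac{\nu}{\nu-1}}$ nor the factor $2$ in the denominator.

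The missing ingredient is a H\"olderian co-coercivity inequality, $\tfrac{2}{\nu S_\alpha}\|\nabla f(x)-\nabla f(y)\|_{1+\alpha-\nu,\nu}^{\nu-1}\le\|x-y\|_{(1+\alpha-\nu)(1-\nu),\frac{\nu}{\nu-1}}$. The paper obtains it by applying Lemma \ref{lemma:block_descent} not to $f$ but to the Bregman-type function $\phi(z)=f(z)-f(y)-\ip{\nabla f(y)}{z-y}$, which has the same block constants, satisfies $\nabla\phi(x)=\nabla f(x)-\nabla f(y)$, and (by convexity) attains its minimum value $0$ at $y$; averaging the resulting per-block bounds with weights $L_i^{\alpha}/S_\alpha$ gives $f(x)-f(y)-\ip{\nabla f(y)}{x-y}\ge\tfrac{1}{\nu S_\alpha}\|\nabla f(x)-\nabla f(y)\|_{1+\alpha-\nu,\nu}^{\nu}$, the symmetrization in $x$ and $y$ supplies the factor $2$, and Cauchy--Schwarz yields co-coercivity. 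Only then does one integrate $f(x+u)-f(x)-\ip{\nabla f(x)}{u}=\int_0^1\ip{\nabla f(x+tu)-\nabla f(x)}{u}\,dt$, bounding the integrand via co-coercivity; the integral $\int_0^1 t^{1/(\nu-1)}dt$ contributes the factor $\tfrac{\nu-1}{\nu}$, and setting $x=x^*$, $u=x-x^*$ finishes. Note that your gradient-domination step is exactly the $y=x^*$ specialization of the displayed Bregman inequality, so you were one symmetrization and one line integral away from the full argument. Your final observation is also worth recording: the dual weight $(1+\alpha-\nu)(1-\nu)$ matches the main-text statement of Lemma \ref{lemma:duality}, but the appendix derivation of that lemma produces $\tfrac{1+\alpha-\nu}{1-\nu}$; the two agree only at $\nu=2$, so one of the two expressions is a typo that should be reconciled before the constant is taken at face value.
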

\noindent We defer the proof of this lemma to the appendix (Appendix \ref{app:level-set}).

Finally, equipped with these tools, we present and prove the theorem that establishes RBCD's convergence rate for convex functions. Afterward, we explain its relationship to its analogue for smooth and convex functions in \cite{Nesterov12}.

\begin{theorem}[RBCD Convergence: Convex Objective Functions]\label{thm.conv.rate.convex}
Let $\{x_n\}_{n=1}^{\infty}$ be the sequence generated by RBCD (Algorithm \ref{algo.main}). If $f$ is a convex and coercive function that satisfies our H{\"o}lder smoothness \eqref{eq:holder} and block H\"older smoothness \eqref{eq:Holder-block} assumptions, then
\[
\mathbb{E}[f(x^{k})]- f^*\leq\frac{\left(\nu S_\alpha(f) R_{(1+\alpha-\nu)(1-\nu),\frac{\nu}{\nu-1}}(x^0)^\nu\right)^{\frac{1}{\nu-1}}(\nu-1)}{\left[2\nu^{\nu-1}+(\nu-1)^\nu k\right]^{\frac{1}{\nu-1}}}=\cO\left(k^{-\frac{1}{\nu-1}}\right)
\]
where $R_{\beta,q}(x^0):=\max_{y}\left\{\|y-x^*\|_{\beta,q}:f(x^*)=f^*,f(y)\leq f(x^0)\right\}<\infty$
\end{theorem}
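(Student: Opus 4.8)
The plan is to reduce the convergence estimate to a one-dimensional recurrence in the expected suboptimality gap $A_k := \mathbb{E}[f(x^k)] - f^*$ and then invoke the Technical Recurrence Lemma (Lemma \ref{lemma:recurrence}). Before anything else, I would record that the RBCD update \eqref{eq:main_update} is exactly the block minimizer appearing in the Block H\"older Descent Lemma (Lemma \ref{lemma:block_descent}), so that $f(x^{k+1}) \leq f(x^k)$ holds deterministically for every realization. Combined with coercivity, this confines every iterate to the initial sublevel set $\{y : f(y) \leq f(x^0)\}$, which guarantees $\|x^k - x^*\| \leq R$ in the relevant weighted norm, where $R := R_{(1+\alpha-\nu)(1-\nu),\frac{\nu}{\nu-1}}(x^0) < \infty$ and $x^*$ is a fixed minimizer.

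The crux is to convert the Sufficient Decrease Lemma (Lemma \ref{lemma:sufficient-decrease}), which controls a power of the weighted gradient norm, into a statement about $A_k$ alone. First I would use convexity to write $f(x^k) - f^* \leq \ip{\nabla f(x^k)}{x^k - x^*}$, and then apply the weighted Cauchy-Schwarz inequality from Lemma \ref{lemma:duality} with exponent $q = \nu$, pairing the gradient norm $\|\nabla f(x^k)\|_{\alpha+1-\nu,\nu}$ with its dual norm evaluated at $x^k - x^*$. Bounding the dual-norm factor by $R$ yields the pointwise lower bound $\|\nabla f(x^k)\|_{\alpha+1-\nu,\nu} \geq (f(x^k)-f^*)/R$. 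Raising this to the $\nu$-th power and substituting into Lemma \ref{lemma:sufficient-decrease} gives, conditionally on $x^k$, a decrease of at least $(f(x^k)-f^*)^\nu/(\nu S_\alpha R^\nu)$. Taking total expectations and applying Jensen's inequality to the convex map $t \mapsto t^\nu$ (this is where the randomization is absorbed) produces the recurrence $A_{k+1} \leq A_k - \theta A_k^\nu$ with $\theta = 1/(\nu S_\alpha R^\nu)$.

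With this recurrence in hand, Lemma \ref{lemma:recurrence} applied with $r = \nu > 1$ immediately gives $A_k \leq A_0\,(1 + (\nu-1)\theta A_0^{\nu-1} k)^{-1/(\nu-1)}$, already an $\cO(k^{-1/(\nu-1)}) = \cO(k^{-\gamma})$ rate. The remaining task is to massage this into the $A_0$-free closed form stated in the theorem. I would clear denominators to rewrite the bound as $A_k \leq A_0\, C^{1/(\nu-1)}(C + (\nu-1)A_0^{\nu-1}k)^{-1/(\nu-1)}$ with $C := \nu S_\alpha R^\nu$, and then compare termwise with the claimed bound; the coefficients of $k$ coincide automatically, so the only thing left to verify is the constant-order inequality $2\nu^{\nu-1}A_0^{\nu-1} \leq (\nu-1)^{\nu-1} C$. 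This is precisely what the sublevel-set lemma (Lemma \ref{lemma:nesterov-holder-cvx-str}) delivers after raising its estimate to the power $\nu-1$.

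I expect the main obstacle to be this last reconciliation step: the recurrence lemma naturally outputs a bound depending on $A_0 = f(x^0)-f^*$, and producing the stated, initialization-agnostic constants $2\nu^{\nu-1}$ and $(\nu-1)^\nu$ requires invoking Lemma \ref{lemma:nesterov-holder-cvx-str} to trade $A_0$ for the sublevel-set radius $R$ in exactly the weighted norm dictated by the Cauchy-Schwarz step. Care is also needed to ensure the norm used for $R$ in the duality bound matches the one in Lemma \ref{lemma:nesterov-holder-cvx-str}, since both the gradient's dual norm and the level-set estimate must be expressed in the same $\|\cdot\|_{(1+\alpha-\nu)(1-\nu),\frac{\nu}{\nu-1}}$ norm for the constants to align.
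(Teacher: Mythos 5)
Your proposal is correct and follows essentially the same route as the paper's proof: convexity plus the weighted Cauchy--Schwarz inequality convert the Sufficient Decrease Lemma into the recurrence $A_{k+1}\leq A_k-\tfrac{1}{\nu S_\alpha R^\nu}A_k^\nu$, the Technical Recurrence Lemma is applied with $r=\nu$, and Lemma \ref{lemma:nesterov-holder-cvx-str} trades $A_0$ for $R$ to yield exactly the stated constants $2\nu^{\nu-1}$ and $(\nu-1)^\nu$. Your explicit remark that the update decreases $f$ deterministically (so every iterate remains in the initial sublevel set, justifying the bound by $R$) makes precise a point the paper leaves implicit, and your use of Jensen for the convex map $t\mapsto t^\nu$ is the correct form of a step the paper states with a small typo as $x\mapsto x^{\nu/2}$.
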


\begin{proof}
The bulk of this proof centers on an application of the Technical Recurrence Lemma (Lemma \ref{lemma:recurrence}). In the context of that lemma, we let $A_i = \mathbb{E}[f(x^{i})]- f^*$ for each $i\geq 0$. By definition, $A_i\geq 0$ for each $i\geq 0$. To simplify notation, we let $R:=R_{(1+\alpha-\nu)(1-\nu),\frac{\nu}{\nu-1}}(x^0)$.

With this notation, we may restate the sufficient decrease inequality \eqref{eq:sufficient-decrease} of Lemma \ref{lemma:sufficient-decrease} as
\begin{equation*}
\bbE\left[\|\nabla f(x_k)\|_{\alpha+1-\nu,\nu}^\nu\right]\leq \nu S_\alpha\cdot\left(A_k-A_{k+1}\right),
\end{equation*}
or, equivalently,
\begin{equation}\label{eq:cvx.recur}
A_{k+1}\leq A_k-\frac{1}{\nu S_\alpha}\bbE\left[\|\nabla f(x_k)\|_{\alpha+1-\nu,\nu}^\nu\right].
\end{equation}
Thus, to apply the Technical Recurrence Lemma (Lemma \ref{lemma:recurrence}) we need only bound the expectation on the right below by $A_k^\nu$. By the Cauchy-Schwarz inequality (Lemma \ref{lemma:duality}) for $\|\cdot\|_{1+\alpha-\nu,\nu}$ and its dual $\|\cdot\|_{(1+\alpha-\nu)(1-\nu),\frac{\nu}{\nu-1}}$, we achieve for any optimum $x^*$, that
\begin{align*}
f(x^k)-f^*&\leq \ip{x^k-x^*}{\nabla f(x^k)}\\
&\leq\|x^k-x^*\|_{(1+\alpha-\nu)(1-\nu),\frac{\nu}{\nu-1}}\|\nabla f(x^k)\|_{1+\alpha-\nu,\nu}\\
&\leq R\|\nabla f(x^k)\|_{1+\alpha-\nu,\nu}.
\end{align*}
Raising each side of the above inequality to the power $\nu$, taking expectations, and applying  Jensen's inequality to the convex function $x\mapsto x^{\nu/2}$, we conclude
\begin{equation*}
A_k^\nu=\left(\bbE[f(x^k)]-f^*\right)^\nu\leq \bbE\left[\left(f(x^k)-f^*\right)^\nu\right]\\
\leq R^\nu\bbE\left[\|\nabla f(x^k)\|_{1+\alpha-\nu,\nu}^\nu\right].
\end{equation*}
Stringing together our work above, equation \eqref{eq:cvx.recur} yields the recurrence
\[
A_{k+1}\leq A_k-\frac{1}{\nu S_\alpha R^\nu} A_k^\nu
\]
for each $k\geq 0$.
We are now permitted to apply the Technical Recurrence Lemma (Lemma \ref{lemma:recurrence}) with  $r=\nu$ and $\theta=\frac{1}{\nu S_\alpha R^\nu}$ to produce
\begin{equation*}
\mathbb{E}[f(x^{k})]- f^*\leq\frac{f(x^{0})- f^*}{\left[1+(\nu-1)\cdot\theta\cdot\left(f(x^{0})- f^*\right)^{\nu-1}\cdot k\right]^{\frac{1}{\nu-1}}}.
\end{equation*}
We dedicate the remainder of this proof to simplifying this convergence bound. By factoring $f(x^{0})- f^*$ out of both the numerator and denominator, we may equivalently write the right-hand side of this bound as
 \begin{equation*}
\frac{1}{\left[\frac{1}{\left(f(x^{0})- f^*\right)^{\nu-1}}+(\nu-1)\cdot\theta\cdot k\right]^{\frac{1}{\nu-1}}}.
\end{equation*}

By considering $x=x^0$ in Lemma~\ref{lemma:nesterov-holder-cvx-str}, raising both sides to the power $\nu-1$ and applying the norm equivalence inequality from Lemma \ref{lemma:duality}, we further see
\begin{align*}
\left[f(x^0)-f(x^*)\right]^{\nu-1}&\leq \left(\frac{\nu S_\alpha}{2}\right)\cdot\left(\frac{\nu-1}{\nu}\right)^{\nu-1}\cdot R^\nu
\end{align*}
so the right-hand side of our bound simplifies to
\[
\left[\frac{1}{\left(\frac{\nu S_\alpha}{2}\right)\cdot\left(\frac{\nu-1}{\nu}\right)^{\nu-1}\cdot R^\nu}+(\nu-1)\cdot\left(\frac{1}{\nu S_\alpha R^\nu}\right)\cdot k\right]^{-\frac{1}{\nu-1}}=\frac{\left(\nu S_\alpha R^\nu\right)^{\frac{1}{\nu-1}}(\nu-1)}{\left(2\nu^{\nu-1}+(\nu-1)^\nu k\right)^{\frac{1}{\nu-1}}},
\]
which concludes the proof.
\end{proof}
Notably, our rate matches that provided by Nesterov in the standard block smooth setting, i.e. when $\nu=2$ we recover the convergence rate,
\[
\bbE[A_{k+1}] \leq \frac{2}{k+4}S_\alpha (f) R^2(x_1) = \mathcal{O}(k^{-1}),
\]
from \cite{Nesterov12}.

\section{Convergence Analysis: Strongly Convex Objectives}
\label{sec:strong-convex}

In this final section, we conclude the paper with a convergence analysis of RBCD (Algorithm \ref{algo.main}) for strongly convex objective functions that are both H\"older \eqref{eq:holder} and block H\"older \eqref{eq:Holder-block} smooth. We say that $f:\mathbb{R}^d\to \mathbb{R}$ is $\sigma$-strongly convex with respect to the norm $\|\cdot\|_{1-\alpha,2}$, where $\sigma>0$, if
\begin{equation}\label{eq:str-cvx}
f(y) \geq f(x) +\ip{\nabla f(x)}{y-x} + \frac{1}{2}\sigma \|x-y\|_{1-\alpha,2}^2
\end{equation}
for all $x,y\in\bbR^d$. The section begins with our main theorem (Theorem \ref{thm.conv.rate.strong-convex}), which provides rates in both the $L$-smooth and H\"older smooth settings. Next, we compare these rates with those in the previous section and \cite{Nesterov12}. Finally, we show that the smooth setting's linear rate is achieved in the limit as $\nu\to 2$, or equivalently, $\gamma\to 1$ (Corollary~\ref{cor:str-cvx.interpolation}). 

Without further ado, we present our main convergence theorem for strongly convex objective functions.

\begin{theorem}[RBCD Convergence: Strongly Convex Objective Functions]\label{thm.conv.rate.strong-convex}
Let $\{x_n\}_{n=1}^{\infty}$ be the sequence generated by RBCD (Algorithm \ref{algo.main}). Suppose that $f:\bbR^d\to\bbR$ is $\sigma$-strongly convex and satisfies both the H{\"o}lder and block H{\"o}lder  smoothness assumptions \eqref{eq:holder} and \eqref{eq:Holder-block}. The following hold:
\begin{enumerate}
\item (Linear Rate - Smooth Setting) If $\nu=2$, i.e. $f$ is smooth, then
\begin{equation}\label{eq:linear-rate}
\bbE[f(x_k)] -f^* \leq \left(1-\frac{\sigma}{S_\alpha(f)} \right)^k \cdot \frac{S_\alpha(f)^{\frac{1}{\nu-1}}(\nu-1)R(x^0)^{\frac{\nu}{\nu-1}}}{\nu^{\frac{\nu-2}{\nu}}2^{\frac{1}{\nu-1}}}=\cO\left(\exp\left(-\frac{\sigma}{S_\alpha(f)}k\right)\right).
\end{equation}
\item (Sublinear Rate - H{\"o}lder Smooth Setting) If $\nu>2$, i.e. $f$ is H{\"o}lder smooth but not smooth, then
\begin{equation*}
 \mathbb{E}[f(x^{k})]- f^* \leq \frac{C_0}{\left(C_1+C_2 k\right)^{\frac{2}{\nu-2}}}=\cO\left(k^{-\frac{2}{\nu-2}}\right),
\end{equation*}
where
\begin{equation*}
\begin{gathered}
C_0=(2\nu S_{\alpha}(f))^{\frac{2}{\nu-2}}m^{\frac{1}{\nu}}(\nu-1)R(x^0)^{\frac{\nu}{\nu-1}},\quad C_1=2^{\frac{\nu-2}{2(\nu-1)}}m^{\frac{\nu-2}{2\nu}} S_{\alpha}(f)^{\frac{\nu-1}{\nu}}\nu^{\frac{\nu^2-2\nu+4}{2\nu}}\\
C_2=R(x^0)^{\frac{\nu(\nu-2)}{2(\nu-1)}}(\nu-1)^{\frac{\nu-2}{2}}(\nu-2)(2\sigma)^{\frac{\nu}{2}}\displaystyle \min_{1\leq i\leq m} L_i^{\frac{(\alpha+1)(2-\nu)}{2\nu}}
\end{gathered}
\end{equation*}
\end{enumerate}
\end{theorem}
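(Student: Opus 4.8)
The plan is to derive a Polyak--{\L}ojasiewicz-type gradient domination inequality from strong convexity, combine it with the Sufficient Decrease Lemma (Lemma~\ref{lemma:sufficient-decrease}) and the norm equivalences of Lemma~\ref{lemma:duality}, and thereby reduce both claims to a single one-step recurrence in $A_k:=\bbE[f(x^k)]-f^*$. Concretely, I expect the recurrence $A_{k+1}\leq A_k-\theta A_k^{\nu/2}$ to emerge, after which part~(1) follows from elementary geometric decay (the degenerate case $r=\nu/2=1$) and part~(2) follows from the Technical Recurrence Lemma (Lemma~\ref{lemma:recurrence}) with $r=\nu/2>1$, whose exponent $1/(r-1)=2/(\nu-2)$ already matches the advertised rate.

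First I would establish the gradient domination inequality. Minimizing the strong convexity lower bound \eqref{eq:str-cvx} over $y$ is a standard squared-norm conjugate computation: for any $x$, $f^*\geq\min_y\left[f(x)+\ip{\nabla f(x)}{y-x}+\tfrac{\sigma}{2}\|y-x\|_{1-\alpha,2}^2\right]=f(x)-\tfrac{1}{2\sigma}\|\nabla f(x)\|_*^2$, where $\|\cdot\|_*$ is the dual of $\|\cdot\|_{1-\alpha,2}$. By the duality statement in Lemma~\ref{lemma:duality} (with $p=q=2$), this dual norm is exactly $\|\cdot\|_{\alpha-1,2}$, so I obtain $f(x)-f^*\leq\tfrac{1}{2\sigma}\|\nabla f(x)\|_{\alpha-1,2}^2$ for all $x$.

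Next I would bridge the two relevant norms. The left-hand side of Lemma~\ref{lemma:sufficient-decrease} is measured in $\|\cdot\|_{\alpha+1-\nu,\nu}$, whereas the gradient domination inequality is stated in $\|\cdot\|_{\alpha-1,2}$. Applying the second part of Lemma~\ref{lemma:duality} with $p=\nu\geq 2$ and $b=\alpha-1$ yields $\|\nabla f(x)\|_{\alpha+1-\nu,\nu}\geq C\|\nabla f(x)\|_{\alpha-1,2}$, where $C=m^{1/\nu-1/2}\min_i L_i^{(\alpha+1)(2-\nu)/(2\nu)}$; here the exponent $\tfrac{\alpha+1-\nu}{\nu}-\tfrac{\alpha-1}{2}$ simplifies to $\tfrac{(\alpha+1)(2-\nu)}{2\nu}$, precisely the power of $\min_i L_i$ appearing in $C_2$. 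Feeding the gradient domination bound into this inequality and then into Lemma~\ref{lemma:sufficient-decrease} gives $f(x^k)-\bbE[f(x^{k+1})\mid x^k]\geq\tfrac{1}{\nu S_\alpha}C^\nu(2\sigma)^{\nu/2}(f(x^k)-f^*)^{\nu/2}$. Taking total expectations and invoking Jensen's inequality for the convex map $t\mapsto t^{\nu/2}$ (valid since $\nu/2\geq 1$) produces the recurrence $A_{k+1}\leq A_k-\theta A_k^{\nu/2}$ with $\theta=\tfrac{C^\nu(2\sigma)^{\nu/2}}{\nu S_\alpha}$.

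Finally I would conclude each case and convert $A_0$ into $R(x^0)$. For $\nu=2$ the constant $C$ collapses to $1$ and $\theta=\sigma/S_\alpha$, so the recurrence reads $A_{k+1}\leq(1-\sigma/S_\alpha)A_k$, giving the linear rate $A_k\leq(1-\sigma/S_\alpha)^k A_0$. For $\nu>2$, Lemma~\ref{lemma:recurrence} with $r=\nu/2$ gives $A_k\leq A_0\big/\big(1+\tfrac{\nu-2}{2}\theta A_0^{(\nu-2)/2}k\big)^{2/(\nu-2)}$. In both cases the remaining task is purely algebraic: substitute the bound $f(x^0)-f^*\leq(\tfrac{\nu S_\alpha}{2})^{1/(\nu-1)}\tfrac{\nu-1}{\nu}R(x^0)^{\nu/(\nu-1)}$ from Lemma~\ref{lemma:nesterov-holder-cvx-str} (as an upper bound in the numerator and, after raising to the power $(\nu-2)/2$, as a lower bound on the $A_0^{-(\nu-2)/2}$ term in the denominator) and collect powers to recover $C_0,C_1,C_2$. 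I expect this constant bookkeeping in the $\nu>2$ case to be the main obstacle, since it requires carefully tracking fractional powers of $\nu$, $S_\alpha$, $\sigma$, $m$, and $R(x^0)$ through the recurrence; the conceptual content --- gradient domination, norm equivalence, and the recurrence lemma --- is otherwise routine.
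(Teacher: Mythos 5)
Your proposal is correct and follows essentially the same route as the paper's proof: the PL inequality obtained by minimizing \eqref{eq:str-cvx} over $y$, the identification of $\|\cdot\|_{\alpha-1,2}$ as the dual of $\|\cdot\|_{1-\alpha,2}$, the norm equivalence of Lemma~\ref{lemma:duality} with $p=\nu$ and $\beta=\alpha-1$, Jensen's inequality to pass to $A_k^{\nu/2}$, and then geometric decay for $\nu=2$ versus Lemma~\ref{lemma:recurrence} with $r=\nu/2$ for $\nu>2$. The one substantive difference is that you (correctly) raise the equivalence constant to the $\nu$-th power, giving $\theta=C^{\nu}(2\sigma)^{\nu/2}/(\nu S_\alpha)$ with $C=m^{\frac{1}{\nu}-\frac{1}{2}}\min_{i} L_i^{\frac{(\alpha+1)(2-\nu)}{2\nu}}$, whereas the paper's recurrence \eqref{eq:recur-str-cvx} carries this constant only to the first power; since $m^{\frac{1}{\nu}-\frac{1}{2}}\le 1$, the paper's larger $\theta$ does not actually follow from Lemma~\ref{lemma:duality}, so your smaller $\theta$ is the rigorous choice. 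Consequently the constants you would obtain replace $m^{\frac{\nu-2}{2\nu}}$ and $\min_{i} L_i^{\frac{(\alpha+1)(2-\nu)}{2\nu}}$ in $C_1$ and $C_2$ by their $\nu$-th powers, but the $\nu=2$ linear rate and the $\cO\bigl(k^{-\frac{2}{\nu-2}}\bigr)$ rate are unaffected.
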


\begin{proof}
Let $R:=R_{(1+\alpha-\nu)(1-\nu),\frac{\nu}{\nu-1}}(x^0)$ to simplify notation. Both parts of the theorem speedily follow from the recurrence
\begin{equation}\label{eq:recur-str-cvx}
A_{k+1}\leq A_{k} - A_{k}^{\frac{\nu}{2}} \cdot \left( \frac{(2\sigma)^{\frac{\nu}{2}}\displaystyle \min_{1\leq i\leq m} L_i^{\frac{(\alpha+1)(2-\nu)}{2\nu}}}{\nu S_{\alpha}m^{\frac{1}{2}-\frac{1}{\nu}}}\right)
\end{equation}
where $A_i = \mathbb{E}[f(x^{i})]- f^*$ for each $i\geq 0$. After establishing \eqref{eq:recur-str-cvx}, we will separately show how each of the Theorem's two parts result from it. 

As in the proof of convergence for non-strongly convex functions, the sufficient decrease inequality \eqref{eq:sufficient-decrease} of Lemma \ref{lemma:sufficient-decrease} implies \eqref{eq:cvx.recur}, which we recall is
\[
A_{k+1}\leq A_k-\frac{1}{\nu S_\alpha}\bbE\left[\|\nabla f(x_k)\|_{\alpha+1-\nu,\nu}^\nu\right].
\]
Glancing at \eqref{eq:recur-str-cvx} and this latest inequality, it becomes immediately clear that we ought to bound $\bbE\left[\|\nabla f(x_k)\|_{\alpha+1-\nu,\nu}^\nu\right]$ below by $A_{k}^{\frac{\nu}{2}}=\bbE\left[ f(x_k)-f^*\right]^{\nu/2}$, appropriately scaled. To this end, strong convexity now makes it's main appearance. Using the standard argument of fixing $x\in\bbR^d$ in $\sigma$-strong convexity's defining inequality \eqref{eq:str-cvx} and minimizing it over $y\in\bbR^d$, we achieve the Polyak-\L ojasiewicz (PL) inequality
\[
\frac{1}{2\sigma} \left(\|\nabla f(x)\|_{1-\alpha,2}^*\right)^2 \geq f(x)-f^* .
\]
Setting $x=x_k$, raising both sides to the power $\nu/2$, and then taking expectations, we see 
\begin{equation*}
\frac{1}{(2\sigma)^{\frac{\nu}{2}}} \bbE\left[\left(\|\nabla f(x_k)\|_{1-\alpha,2}^*\right)^\nu \right] \geq \bbE\left[(f(x_k)-f^*)^{\nu/2}\right],
\end{equation*}
which, by Jensen's inequality applied to the convex function $x\mapsto x^{\nu/2}$, produces
\begin{equation}\label{eq:strcv.Jens}
\frac{1}{(2\sigma)^{\frac{\nu}{2}}} \bbE\left[\left(\|\nabla f(x_k)\|_{1-\alpha,2}^*\right)^\nu \right] \geq \left(\bbE\left[ f(x_k)-f^*\right] \right)^{\nu/2} = A_{k}^{\frac{\nu}{2}}
\end{equation}
The main bound \eqref{eq:recur-str-cvx} is secured by twice applying the $(\alpha,q)$-Norm Duality Equivalence Lemma (Lemma \ref{lemma:duality}) to connect the recurrence inequality \eqref{eq:cvx.recur} and the PL-derived bound \eqref{eq:strcv.Jens},
\begin{align*}
A_k- A_{k+1}&\stackrel{\eqref{eq:cvx.recur}}\geq \frac{1}{\nu S_\alpha}\bbE\left[\|\nabla f(x_k)  \|_{\alpha+1-\nu,\nu}^\nu\right] \\
&\stackrel{\text{Lemma }\ref{lemma:duality}}\geq \frac{1}{\nu S_\alpha}\cdot \left(\frac{\displaystyle \min_{1\leq i\leq m}L_i^{\frac{\alpha+1-\nu}{\nu}-\frac{\alpha-1}{2}}}{m^{\frac{1}{2}-\frac{1}{\nu}}} \right)\cdot \bbE\left[\|\nabla f(x_k)\|_{\alpha-1,2}^\nu\right]\\
&\stackrel{\text{Lemma }\ref{lemma:duality}}=\frac{1}{\nu S_\alpha}\cdot \left(\frac{\displaystyle \min_{1\leq i\leq m}L_i^{\frac{(\alpha+1)(2-\nu)}{2\nu}}}{m^{\frac{1}{2}-\frac{1}{\nu}}} \right)\cdot \bbE\left[\left(\|\nabla f(x_k)\|_{1-\alpha,2}^*\right)^\nu\right]\\
&\stackrel{\eqref{eq:strcv.Jens}}\geq \frac{1}{\nu S_\alpha}\cdot \left(\frac{\displaystyle \min_{1\leq i\leq m}L_i^{\frac{(\alpha+1)(2-\nu)}{2\nu}}}{m^{\frac{1}{2}-\frac{1}{\nu}}} \right)\cdot \left[(2\sigma)^{\frac{\nu}{2}}A_{k}^{\frac{\nu}{2}}\right].
\end{align*}
Now, we are prepared to prove the theorem's two constituent parts.\\

\noindent 1. If $\nu=2$, then the main recurrence inequality \eqref{eq:recur-str-cvx} becomes
\[
A_{k+1}\leq A_{k} - A_k^{} \cdot \left( \frac{\sigma}{S_{\alpha}}\right) = A_k\left(1-\frac{\sigma}{S_\alpha} \right),
\]
which by backward induction is equivalent to our desired bound,
\begin{align}\label{eq:str-cvx.nu=2}
\bbE[f(x_k)] -f^* = A_k\leq \left(1-\frac{\sigma}{S_\alpha} \right)^k\cdot  A_0&=\left(1-\frac{\sigma}{S_\alpha} \right)^k \cdot \left[f(x_0) -f^* \right]\\
&\leq \left(1-\frac{\sigma}{S_\alpha} \right)^k \cdot \frac{S_\alpha^{\frac{1}{\nu-1}}(\nu-1)R^{\frac{\nu}{\nu-1}}}{\nu^{\frac{\nu-2}{\nu}}2^{\frac{1}{\nu-1}}},\nonumber
\end{align}
where we have applied Lemma \ref{lemma:nesterov-holder-cvx-str} in the first line.\\

\noindent 2. The $\nu>2$ result requires a verification that is as straightforward as, but more tedious than, that of 1. Applying the Technical Recurrence Lemma (Lemma \ref{lemma:recurrence}) with  $r=\nu/2$, and\\ $\theta=\frac{(2\sigma)^{\frac{\nu}{2}}}{\nu S_\alpha m^{\frac{\nu-2}{2\nu}}}\cdot \displaystyle \min_{1\leq i\leq m} L_i^{\frac{(\alpha+1)(2-\nu)}{2\nu}}$ we see
\begin{equation}\label{eq:rate-for-interpolation}
\mathbb{E}[f(x^{k})]- f^*\leq \frac{f(x^{0})- f^*}{\left[1+\left(\frac{\nu-2}{2}\right)\cdot \frac{(2\sigma)^{\frac{\nu}{2}}}{\nu S_\alpha m^{\frac{\nu-2}{2\nu}}}\cdot \displaystyle \min_{1\leq i\leq m} L_i^{\frac{(\alpha+1)(2-\nu)}{2\nu}}\cdot\left(f(x^{0})- f^*\right)^{\frac{\nu-2}{2}}\cdot k\right]^{\frac{2}{\nu-2}}}.
\end{equation}
This intermediate form of our convergence rate will facilitate the proof of our later convergence rate interpolation result (Corollary \ref{cor:str-cvx.interpolation}) so we have labeled it. 

For now though, we focus on processing this expression of the rate into its final form. The first step is to simply re-arrange this to
\[
\mathbb{E}[f(x^{k})]- f^*\leq \frac{(2\nu S_{\alpha})^{\frac{2}{\nu-2}}m^{\frac{1}{\nu}}(f(x^0)-f^*)}{\left[2\nu S_\alpha m^{\frac{\nu-2}{2\nu}}+ (\nu-2)(2\sigma)^{\frac{\nu}{2}}\displaystyle \min_{1\leq i\leq m} L_i^{\frac{(\alpha+1)(2-\nu)}{2\nu}}\cdot\left(f(x^{0})- f^*\right)^{\frac{\nu-2}{2}}k\right]^{\frac{2}{\nu-2}}}.
\]
By factoring $f(x^{0})- f^*$ out of both the numerator and denominator, and by applying Lemma \ref{lemma:nesterov-holder-cvx-str} to the previous expression, its right-hand side simplifies to
\begin{align*}\label{eq:rate-for-interpolation}
 &= \frac{(2\nu S_{\alpha})^{\frac{2}{\nu-2}}m^{\frac{1}{\nu}}}{\left[\frac{\nu S_\alpha m^{\frac{\nu-2}{2\nu}}}{(f(x^0)-f^*)^{\frac{\nu-2}{2}}}+ (\nu-2)(2\sigma)^{\frac{\nu}{2}}\displaystyle \min_{1\leq i\leq m} L_i^{\frac{(\alpha+1)(2-\nu)}{2\nu}}k\right]^{\frac{2}{\nu-2}}}\\
&\stackrel{\text{Lemma }\ref{lemma:nesterov-holder-cvx-str}}\leq \frac{(2\nu S_{\alpha})^{\frac{2}{\nu-2}}m^{\frac{1}{\nu}}}{\left[\frac{\nu S_\alpha m^{\frac{\nu-2}{2\nu}}}{\left( \frac{S_{\alpha}^{\frac{\nu-2}{2(\nu-1)}} (\nu-1)^{\frac{\nu-2}{2}} R^{\frac{\nu(\nu-2)}{2(\nu-1)}}}{\nu^{\frac{(\nu-2)^2}{2\nu}}2^{\frac{\nu-2}{2(\nu-1)}}}\right)}+ (\nu-2)(2\sigma)^{\frac{\nu}{2}}\displaystyle \min_{1\leq i\leq m} L_i^{\frac{(\alpha+1)(2-\nu)}{2\nu}}k\right]^{\frac{2}{\nu-2}}}\\
&= \frac{(2\nu S_{\alpha})^{\frac{2}{\nu-2}}m^{\frac{1}{\nu}}}{\left[\frac{2^{\frac{\nu-2}{2(\nu-1)}}m^{\frac{\nu-2}{2\nu}} S_{\alpha}^{\frac{\nu-1}{\nu}}\nu^{\frac{\nu^2-2\nu+4}{2\nu}}         
}{(\nu-1)^{\frac{\nu-2}{2}}R^{\frac{\nu(\nu-2)}{2(\nu-1)}}}+ (\nu-2)(2\sigma)^{\frac{\nu}{2}}\displaystyle \min_{1\leq i\leq m} L_i^{\frac{(\alpha+1)(2-\nu)}{2\nu}}k\right]^{\frac{2}{\nu-2}}}\\
&= \frac{(2\nu S_{\alpha})^{\frac{2}{\nu-2}}m^{\frac{1}{\nu}}(\nu-1)R^{\frac{\nu}{\nu-1}}}{\left[2^{\frac{\nu-2}{2(\nu-1)}}m^{\frac{\nu-2}{2\nu}} S_{\alpha}^{\frac{\nu-1}{\nu}}\nu^{\frac{\nu^2-2\nu+4}{2\nu}}+ R^{\frac{\nu(\nu-2)}{2(\nu-1)}}(\nu-1)^{\frac{\nu-2}{2}}(\nu-2)(2\sigma)^{\frac{\nu}{2}}\displaystyle \min_{1\leq i\leq m} L_i^{\frac{(\alpha+1)(2-\nu)}{2\nu}}k\right]^{\frac{2}{\nu-2}}}
\end{align*}

\end{proof}

We now make two crucial comparisons for the rates above. First, it is noteworthy that when $\nu=2$ in the strongly-convex regime, we recover the same linear rate as that in \cite{Nesterov12}. Second, our strongly convex sublinear rate in the $\nu>2$ setting, $\cO(k^{-\frac{2}{\nu-2}})$, is indeed faster than the $\cO(k^{-\frac{1}{\nu-1}})$ rate occurring in the merely convex case. 

To conclude this article, we demonstrate that, in the strongly convex case, when $\nu\to 2$ the intermediate form \eqref{eq:rate-for-interpolation} of the strongly convex sublinear rate of Theorem \ref{thm.conv.rate.strong-convex} converges to its $\nu=2$ linear rate of convergence. 


\begin{corollary}\label{cor:str-cvx.interpolation}
\emph{(Interpolation of Linear and Sublinear Rate)} In the strongly convex setting of Theorem \ref{thm.conv.rate.strong-convex}, if $\nu\to 2$ then the sublinear rate converges to a linear rate of convergence. More formally, for the convergence bound
\[
\mathbb{E}[f(x^{k})]- f^*\leq \frac{f(x^{0})- f^*}{\left[1+\left(\frac{\nu-2}{2}\right)\cdot \frac{(2\sigma)^{\frac{\nu}{2}}}{\nu S_\alpha m^{\frac{\nu-2}{2\nu}}}\cdot \displaystyle \min_{1\leq i\leq m} L_i^{\frac{(\alpha+1)(2-\nu)}{2\nu}}\cdot\left(f(x^{0})- f^*\right)^{\frac{\nu-2}{2}}\cdot k\right]^{\frac{2}{\nu-2}}},
\]
for $k\geq 0$, we observe the limiting result
\begin{multline*}
\lim_{\nu\to 2^+}\frac{f(x^{0})- f^*}{\left[1+\left(\frac{\nu-2}{2}\right)\cdot \frac{(2\sigma)^{\frac{\nu}{2}}}{\nu S_\alpha m^{\frac{\nu-2}{2\nu}}}\cdot \displaystyle \min_{1\leq i\leq m} L_i^{\frac{(\alpha+1)(2-\nu)}{2\nu}}\cdot\left(f(x^{0})- f^*\right)^{\frac{\nu-2}{2}}\cdot k\right]^{\frac{2}{\nu-2}}}\\
\leq\left(1-\frac{\sigma}{2S_\alpha} \right)^k\cdot \left(f(x^{0})- f^*\right)
\end{multline*}
\end{corollary}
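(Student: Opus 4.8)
The plan is to read the displayed bound as an expression of the form $\left(f(x^0)-f^*\right)\cdot\left(1+\epsilon B_\epsilon\right)^{-1/\epsilon}$ and to extract its limit via the classical characterization of the exponential. Concretely, I would set $\epsilon:=\frac{\nu-2}{2}$, so that $\epsilon\to 0^+$ exactly when $\nu\to 2^+$ and the outer exponent $\frac{2}{\nu-2}$ equals $\frac{1}{\epsilon}$. The denominator then reads $\left(1+\epsilon B_\epsilon\right)^{1/\epsilon}$, where
\[
B_\epsilon=\frac{(2\sigma)^{\nu/2}}{\nu S_\alpha\, m^{(\nu-2)/(2\nu)}}\cdot\min_{1\le i\le m}L_i^{\frac{(\alpha+1)(2-\nu)}{2\nu}}\cdot\left(f(x^0)-f^*\right)^{(\nu-2)/2}\cdot k .
\]
Here I may assume $f(x^0)>f^*$, since otherwise both sides vanish and there is nothing to prove.

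First I would compute $\lim_{\nu\to 2^+}B_\epsilon$ factor by factor: the exponents $\frac{\nu-2}{2\nu}$, $\frac{(\alpha+1)(2-\nu)}{2\nu}$, and $\frac{\nu-2}{2}$ all tend to $0$, so $m^{(\nu-2)/(2\nu)}$, $\min_{i}L_i^{\frac{(\alpha+1)(2-\nu)}{2\nu}}$, and $\left(f(x^0)-f^*\right)^{(\nu-2)/2}$ each tend to $1$, while $(2\sigma)^{\nu/2}\to 2\sigma$ and $\nu\to 2$. Hence $B_\epsilon\to B_0:=\frac{\sigma}{S_\alpha}k$. Taking logarithms and using $\log(1+u)=u+O(u^2)$ gives $\frac{1}{\epsilon}\log\left(1+\epsilon B_\epsilon\right)=B_\epsilon+O(\epsilon)\to B_0$ (the correction term $-\tfrac12\epsilon B_\epsilon^2$ vanishes because $B_\epsilon$ stays bounded), so the denominator converges to $e^{B_0}=e^{\sigma k/S_\alpha}$ and the entire right-hand side converges to $\left(f(x^0)-f^*\right)e^{-\sigma k/S_\alpha}$.

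It then remains to compare this limit with the claimed linear bound. Since $f(x^0)-f^*\ge 0$ and both $e^{-\sigma/S_\alpha}$ and $1-\frac{\sigma}{2S_\alpha}$ are nonnegative, it suffices to prove the base inequality $e^{-\sigma/S_\alpha}\le 1-\frac{\sigma}{2S_\alpha}$ and raise both sides to the power $k$. Writing $t:=\sigma/S_\alpha$, this is the scalar inequality $e^{-t}\le 1-\tfrac t2$. I would justify the restriction $t\le 1$ by noting that the very validity of the $\nu=2$ contraction factor $1-\sigma/S_\alpha$ in Part 1 of Theorem \ref{thm.conv.rate.strong-convex} forces $\sigma\le S_\alpha$ (an $A_{k+1}\ge 0$ argument, or the standard fact that the strong convexity modulus is dominated by the smoothness constant). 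On $[0,1]$ the inequality $e^{-t}\le 1-\tfrac t2$ follows from analyzing $h(t)=1-\tfrac t2-e^{-t}$: it vanishes at $t=0$, has $h'(t)=e^{-t}-\tfrac12\ge 0$ on $[0,\log 2]$, and remains positive through $t=1$ since $h(1)=\tfrac12-e^{-1}>0$.

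I expect the only genuine subtlety to be the limit of $\left(1+\epsilon B_\epsilon\right)^{1/\epsilon}$ when the base $B_\epsilon$ itself varies with $\epsilon$; the resolution is precisely that $B_\epsilon$ converges to a finite constant, so the heuristic $\left(1+x/n\right)^n\to e^x$ applies rigorously through the logarithm expansion above. The accompanying scalar inequality $e^{-t}\le 1-\tfrac t2$ is elementary but genuinely needs the admissible range $t\le 1$, which is exactly where the structural constraint $\sigma\le S_\alpha$ must be invoked.
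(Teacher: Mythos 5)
Your proof is correct and follows essentially the same route as the paper: both rewrite the bound as $(1+\epsilon B_\epsilon)^{-1/\epsilon}$ with $\epsilon=\frac{\nu-2}{2}$, identify the limit $e^{-\sigma k/S_\alpha}$ (the paper via l'H\^opital applied to $\ln(1+g(x)x)/x$, you via the expansion $\log(1+u)=u+O(u^2)$, which are interchangeable here since $B_\epsilon$ converges), and then compare $e^{-\sigma/S_\alpha}$ with $1-\frac{\sigma}{2S_\alpha}$. The one place you genuinely diverge is the final scalar step, and there your version is the more careful one: the paper chains $e^{-t}\leq(1+t)^{-1}\leq 1-\tfrac{t}{2}$ and asserts the second inequality ``for any nonnegative $x$,'' but $(1+t)^{-1}\leq 1-\tfrac{t}{2}$ is false for $t>1$ (e.g.\ $t=2$ gives $\tfrac13>0$), so the paper implicitly needs $t=\sigma/S_\alpha\leq 1$ without saying so. You prove $e^{-t}\leq 1-\tfrac{t}{2}$ directly on $[0,1]$ and explicitly justify $\sigma\leq S_\alpha$ (either from $A_{k+1}\geq 0$ in the $\nu=2$ recurrence when $A_0>0$, or from the usual domination of the strong-convexity modulus by the smoothness constant), which closes a small gap the paper leaves open. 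Your separate handling of the degenerate case $f(x^0)=f^*$ is also a sensible addition.
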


\begin{proof}
The form of the sublinear convergence bound here was established by the immediately preceeding theorem in equation \eqref{eq:rate-for-interpolation}. The use of said rate in the proof of this corollary was foreshadowed there. 

To prove our main limit result, suppose for a moment that
\begin{equation}\label{eq:interp.limit}
\lim_{x \to 0^+} \left[1+g(x)\cdot x\right]^{-\frac{1}{x}}= e^{-g(0)}
\end{equation}
holds for any continuously differentiable $g:[0,\infty)\to[0,\infty)$ such that $g(0)>0$. Restating our sublinear convergence rate, we see that
\[
\frac{\mathbb{E}[f(x^{k})]- f^*}{f(x^0)-f^*}\leq 
 \left[ 1+g\left(\frac{\nu-2}{2}\right)\cdot \left(\frac{\nu-2}{2}\right)\right]^{\frac{2}{2-\nu}}
\]
where 
\[
g(x) = \frac{(2\sigma)^{x+1}}{(2x+2)S_\alpha m^{\frac{x}{2x+2}}}\cdot \min_{1\leq i\leq m}L_i^{-\frac{(\alpha+1)x}{2x+2}} \cdot A_0^{x}\cdot k.
\]
Observe that $g$ is continuous differentiable and $g(0)=\sigma/S_\alpha>0$. Thus, it follows that
\begin{align*}
\frac{\mathbb{E}[f(x^{k})]- f^*}{f(x^0)-f^*}\leq & 
\lim_{\nu\to 2^+} \left[ 1+g\left(\frac{\nu-2}{2}\right)\cdot \left(\frac{\nu-2}{2}\right)\right]^{\frac{2}{2-\nu}}\\
\stackrel{\eqref{eq:interp.limit}}= &e^{-g(0)}=\left(e^{-\frac{\sigma}{S_\alpha}}\right)^k \\
\leq & \left(\frac{1}{1+\frac{\sigma}{S_\alpha}}\right)^k \leq \left(1-\frac{\sigma}{2S_\alpha} \right)^k
\end{align*}
where we have used in the last line the standards inequalities $e^{x}\geq 1+x$ and $(1+x)^{-1}<(1-x/2)$, for any nonegative $x$, respectively. Thus, we only need to prove \eqref{eq:interp.limit} to finish the proof.

The proof is a simply straightforward computation:
\begin{align*}
\lim_{x\to 0^+}\left[1+g(x)\cdot x\right]^{-\frac{1}{x}}&=\lim_{x\to 0^+}\exp\left(-\frac{\ln \left[1+g(x)\cdot x\right]}{x}\right)\\
&=\exp\left(-\lim_{x\to 0^+}\frac{\ln \left[1+g(x)\cdot x\right]}{x}\right)\\
&=\exp\left(-\lim_{x\to 0^+}\frac{g(x)+x\cdot g'(x)}{1+x\cdot g(x)}\right)\\
&=\exp(-g(0))
\end{align*}
where continuity of $x\mapsto e^x$ is used in the second line, l'H\^opital's rule is used in the third line, and the definition of $g$ is used in the final line.
\end{proof}

\bibliographystyle{plain}


\clearpage

\appendix

\section{Technical Appendix}
\subsection{Proof of Lemma \ref{lemma:duality}}\label{app:duality}

In this section of the technical appendix, we prove Lemma \ref{lemma:duality}.\\

\noindent 1. We begin by choosing $x,y\in\bbR^d$. The inequality is trivial if $x=0$ so we assume $x\neq 0$. By the standard and $p$-norm versions of the Cauchy-Schwarz inequality, we compute

\begin{multline*}
\ip{x}{y}=\sum_{i=1}^m\ip{P_i x}{P_i y}\leq\sum_{i=1}^m \|P_i x\|\|P_i y\|
=\sum_{i=1}^m \left(L_i^{\alpha/p}\|P_i x\|\right)\left(\frac{\|P_i y\|}{L_i^{\alpha/p}}\right)\\
=\ip{\left(L_1^{\alpha/p}\|P_1 x\|,\ldots,L_1^{\alpha/p}\|P_1 x\|\right)}{\left(\frac{\|P_1 y\|}{L_1^{\alpha/p}},\ldots,\frac{\|P_m y\|}{L_m^{\alpha/p}}\right)}\\
\leq\left\|\left(L_1^{\alpha/p}\|P_1 x\|,\ldots,L_m^{\alpha/p}\|P_m x\|\right)\right\|_p\left\|\left(\frac{\|P_1 y\|}{L_1^{\alpha/p}},\ldots,\frac{\|P_m y\|}{L_m^{\alpha/p}}\right)\right\|_q=\|x\|_{\alpha,p}\|y\|_{-\alpha\frac{p}{q},q}
\end{multline*}
By the standard Cauchy-Schwarz inequality, the first inequality is obtained with equality if and only if one of $P_i x$ and $P_i y$ is a scalar multiple of the other for each $i=1,\ldots,m$. By the Cauchy-Schwarz inequality for $p$-norms,  and our assumption that $x\neq 0$, the second inequality obtains equality if and only if there is some $c\in\bbR$ such that
\[
c^q\cdot \left(L_1^{\alpha}\|P_1 x\|^p,\ldots,L_m^{\alpha}\|P_m x\|^p\right)=\left(\frac{\|P_1 y\|^q}{L_1^{\alpha q/p}},\ldots,\frac{\|P_m y\|^q}{L_m^{\alpha q/p}}\right).
\]
Assuming both inequalities hold then, we conclude  there are $c_1,\ldots,c_m,c\in\bbR$ such that $P_iy=c_i \cdot P_ix$ and $c^q\cdot L_i^{\alpha}\|P_i x\|^p=\frac{\|P_i y\|^q}{L_i^{\alpha q/p}}$ for $i=1,\ldots,m$. Fixing $1\leq i\leq m$, and combining the equalities, we see that $c^q\cdot L_i^{\alpha}\|P_i x\|^p=c_i^q\cdot \frac{\|P_i x\|^q}{L_i^{\alpha q/p}}$, so
\[
c_i=c\cdot L_i^{\alpha\left(1-\frac{q}{p}\right)}\|P_i x\|^{\frac{p}{q}-1}=c\cdot L_i^\alpha \|P_i x\|^{p-2}
\]
where we use the definition of $q$ as $p$'s H{\"o}lder conjugate to produce the second equality. This completes the proof of 1.\\

\noindent 2. Given $x\in\bbR^d$, consider the vector $(\|P_j x\|_j)_{j=1}^{m}$. For any $p\geq 2$, the norm equivalence inequality yields
\[
\|x\|_{\alpha,p} = \left(\sum_{j=1}^m L_j^\alpha\|P_jx\|_2^p\right)^{\frac{1}{p}} = \left(\sum_{j=1}^m \|L_j^{\alpha/p}\cdot P_jx\|_2^p\right)^{\frac{1}{p}} \leq \left(\sum_{j=1}^m \|L_j^{\alpha/p}\cdot P_jx\|_2^2\right)^{\frac{1}{2}} 
\]
But,
\[
\left(\sum_{j=1}^m \|L_j^{\alpha/p}\cdot P_jx\|_2^2\right)^{\frac{1}{2}} = \left(\sum_{j=1}^m L_j^{2\alpha/p} \|P_jx\|_2^2\right)^{\frac{1}{2}} \leq \|x\|_{\beta,2} \cdot \max_{1\leq j\leq m}L_m^{\frac{\alpha}{p}-\frac{\beta}{2}} 
\]
as, for any $i$, with $1\leq i\leq m$, $L_i^{2\alpha/p} \leq L^{\beta}\left(\max_{1\leq j\leq m} L_m^{\frac{\alpha}{p}-\frac{\beta}{2}}\right)^2$ and we complete the first part of the inequality. The second part is quite similar. By the equivalence norm inequality, 
\[
\|x\|_{\beta,2} = \left(\sum_{j=1}^m L_j^{\beta} \|P_jx\|_2^2\right)^{\frac{1}{2}}  = \left(\sum_{j=1}^m  \|L_j^{\beta/2} \cdot P_jx\|_2^2\right)^{\frac{1}{2}} \leq m^{\frac{1}{2}-\frac{1}{p}}\cdot \left(\sum_{j=1}^m  \|L_j^{\beta/2} \cdot P_jx\|_2^p\right)^{\frac{1}{p}}
\]
But
\[
\left(\sum_{j=1}^m  \|L_j^{\beta/2} \cdot P_jx\|_2^p\right)^{\frac{1}{p}} = \left(\sum_{j=1}^m  L_j^{\beta p/2}\|P_jx\|_2^p\right)^{\frac{1}{p}} \leq \|x\|_{\alpha,p}\cdot \max_{1\leq j\leq m}L_{j}^{\frac{\beta}{2}-\frac{\alpha}{p}} = \frac{\|x\|_{\alpha,p}}{\displaystyle \min_{1\leq j\leq m}L_{j}^{\frac{\alpha}{p}-\frac{\beta}{2}}}
\]
and we are done.

\subsection{Proof of Lemma \ref{lemma:nesterov-holder-cvx-str}}\label{app:level-set}

Suppose we are able to prove for all $x,y\in\bbR^d$ that 
\begin{equation}\label{eq:holder-modified}
f(y)\leq f(x)+\ip{\nabla f(x)}{y-x}+\left(\frac{\nu S_\alpha}{2}\right)^{\frac{1}{\nu-1}}\cdot\left(\frac{\nu-1}{\nu}\right)\cdot\|u\|_{(1+\alpha-\nu)(1-\nu),\frac{\nu}{\nu-1}}^{\frac{\nu}{\nu-1}}
\end{equation}
Then $x^*$, the first-order condition $\nabla f(x^*)$ naturally holds so \eqref{eq:holder-modified} implies
\begin{align*}
f(x)-f(x^*)&\leq \ip{\nabla f(x^*)}{x-x^*}+\left(\frac{\nu S_\alpha}{2}\right)^{\frac{1}{\nu-1}}\cdot\left(\frac{\nu-1}{\nu}\right)\cdot\|x-x^*\|_{(1+\alpha-\nu)(1-\nu),\frac{\nu}{\nu-1}}^{\frac{\nu}{\nu-1}}\\
&=\left(\frac{\nu S_\alpha}{2}\right)^{\frac{1}{\nu-1}}\cdot\left(\frac{\nu-1}{\nu}\right)\cdot\|x-x^*\|_{(1+\alpha-\nu)(1-\nu),\frac{\nu}{\nu-1}}^{\frac{\nu}{\nu-1}}\\
&\leq\left(\frac{\nu S_\alpha}{2}\right)^{\frac{1}{\nu-1}}\cdot\left(\frac{\nu-1}{\nu}\right)\cdot R(x)_{(1+\alpha-\nu)(1-\nu),\frac{\nu}{\nu-1}}^{\frac{\nu}{\nu-1}}
\end{align*}
and this completes the proof. Thus, it suffices to prove \eqref{eq:holder-modified}. Suppose that 
\begin{equation}\label{eq:holder-cocoercive}
\frac{2}{\nu S_\alpha}\|\nabla f(x)-\nabla f(y)\|_{1+\alpha-\nu,\nu}^{\nu-1}\leq \|x-y\|_{(1+\alpha-\nu)(1-\nu),\frac{\nu}{\nu-1}}
\end{equation}
holds for all $x,y\in\bbR^d$. Then, given $u\in\bbR^d$, the integral formulation of the mean value theorem states
\[
f(x+u)-f(x)=\int_0^1\ip{\nabla f(x+tu)}{u}dt,
\]
Thus, the Cauchy-Schwartz inequality and our previous inequality imply
\begin{align*}
f(x+u)-f(x)-\ip{\nabla f(x)}{u}&=\int_0^1\ip{\nabla f(x+tu)-\nabla f(x)}{u}dt\\
&\leq\int_0^1 \|\nabla f(x+tu)-\nabla f(x)\|_{1+\alpha-\nu,\nu}\|u\|_{(1+\alpha-\nu)(1-\nu),\frac{\nu}{\nu-1}}dt\\
&\leq\int_0^1 \left(\frac{\nu S_\alpha}{2}\|tu\|_{(1+\alpha-\nu)(1-\nu),\frac{\nu}{\nu-1}}\right)^{\frac{1}{\nu-1}}\|u\|_{(1+\alpha-\nu)(1-\nu),\frac{\nu}{\nu-1}}dt\\
&=\left(\frac{\nu S_\alpha}{2}\right)^{\frac{1}{\nu-1}}\|u\|_{(1+\alpha-\nu)(1-\nu),\frac{\nu}{\nu-1}}^{\frac{\nu}{\nu-1}}\int_0^1 t^{\frac{1}{\nu-1}}dt\\
&=\left(\frac{\nu S_\alpha}{2}\right)^{\frac{1}{\nu-1}}\cdot\left(\frac{\nu-1}{\nu}\right)\cdot\|u\|_{(1+\alpha-\nu)(1-\nu),\frac{\nu}{\nu-1}}^{\frac{\nu}{\nu-1}},
\end{align*}
so taking $u=y-x$ in \eqref{eq:holder-cocoercive} completes the proof. Consequently, we now need only prove the H{\"o}lderian co-coercivity condition \eqref{eq:holder-cocoercive}, which we do presently.

Given $y\in\bbR^d$, the function $x\mapsto\phi(x):=f(x)-f(y)-\ip{\nabla f(y)}{x-y}$ is readily seen to be H{\"o}lder block smooth because $f$. Moreover, $\phi$ has the same block H{\"o}lder smoothness constants and $\nabla \phi(x)=\nabla f(x)-\nabla f(y)$. Thus, by the Block H{\"o}lder Descent Lemma (Lemma \ref{lemma:block_descent}),
\begin{multline*}
f(x)-f(y)-\ip{\nabla f(y)}{y-x}= \phi(y)-\min_{z\in\bbR^d}\phi(z)\geq\max_{1\leq i\leq m}\left[\frac{1}{\nu L_i^{\nu-1}}\|\nabla_i f(x)-\nabla_i f(y)\|^\nu\right]\\
\geq \frac{1}{\nu S_{\alpha}}\sum_{i=1}^m\frac{L_i^\alpha}{L_i^{\nu-1}}\|\nabla_i f(x)-\nabla_i f(y)\|^\nu
= \frac{1}{\nu S_{\alpha}}\|\nabla f(x)-\nabla f(y)\|_{1+\alpha-\nu,\nu}^\nu,
\end{multline*}
which we may restate as
\[
f(x)\geq f(y)+\ip{\nabla f(y)}{y-x}+\frac{1}{\nu S_{\alpha}}\|\nabla f(x)-\nabla f(y)\|_{1+\alpha-\nu,\nu}^\nu.
\]
Adding this inequality to its analogue with the roles of $x$ and $y$ reversed, we see produce
\[
\frac{2}{\nu S_\alpha}\|\nabla f(x)-\nabla f(y)\|_{1+\alpha-\nu,\nu}^\nu\leq\ip{\nabla f(x)-\nabla f(y)}{x-y}.
\]
By Cauchy-Schwarz, we then see that
\[
\frac{2}{\nu S_\alpha}\|\nabla f(x)-\nabla f(y)\|_{1+\alpha-\nu,\nu}^\nu\leq\|\nabla f(x)-\nabla f(y)\|_{1+\alpha-\nu,\nu}\|x-y\|_{(1+\alpha-\nu)(1-\nu),\frac{\nu}{\nu-1}},
\]
or equivalently
\[
\frac{2}{\nu S_\alpha}\|\nabla f(x)-\nabla f(y)\|_{1+\alpha-\nu,\nu}^{\nu-1}\leq \|x-y\|_{(1+\alpha-\nu)(1-\nu),\frac{\nu}{\nu-1}}.
\]
Given $u\in\bbR^d$, the integral formulation of the mean value theorem states
\[
f(x+u)-f(x)=\int_0^1\ip{\nabla f(x+tu)}{u}dt,
\]
so the Cauchy-Schwarz inequality and the previous inequality imply
\begin{align*}
f(x+u)-f(x)-\ip{\nabla f(x)}{u}&=\int_0^1\ip{\nabla f(x+tu)-\nabla f(x)}{u}dt\\
&\leq\int_0^1 \|\nabla f(x+tu)-\nabla f(x)\|_{1+\alpha-\nu,\nu}\|u\|_{(1+\alpha-\nu)(1-\nu),\frac{\nu}{\nu-1}}dt\\
&\leq\int_0^1 \left(\frac{\nu S_\alpha}{2}\|tu\|_{(1+\alpha-\nu)(1-\nu),\frac{\nu}{\nu-1}}\right)^{\frac{1}{\nu-1}}\|u\|_{(1+\alpha-\nu)(1-\nu),\frac{\nu}{\nu-1}}dt\\
&=\left(\frac{\nu S_\alpha}{2}\right)^{\frac{1}{\nu-1}}\|u\|_{(1+\alpha-\nu)(1-\nu),\frac{\nu}{\nu-1}}^{\frac{\nu}{\nu-1}}\int_0^1 t^{\frac{1}{\nu-1}}dt\\
&=\left(\frac{\nu S_\alpha}{2}\right)^{\frac{1}{\nu-1}}\cdot\left(\frac{\nu-1}{\nu}\right)\cdot\|u\|_{(1+\alpha-\nu)(1-\nu),\frac{\nu}{\nu-1}}^{\frac{\nu}{\nu-1}}.
\end{align*}
Taking $u=y-x$ completes the proof.

\end{document}